\documentclass[11pt,reqno]{amsart}
\usepackage{graphicx,color}

\usepackage{amsmath,amsfonts,amssymb,amsthm,mathrsfs,amsopn}
\usepackage{latexsym}

 \setlength{\voffset}{0 cm} 
 \setlength{\oddsidemargin}{-0.5cm} \setlength{\evensidemargin}{-0.5cm}
 \setlength{\textwidth}{17cm} \setlength{\textheight}{21.2cm}

\theoremstyle{plain}

\theoremstyle{plain}
\newtheorem{theorem}{Theorem} [section]

\newtheorem{lemma}[theorem]{Lemma}

\theoremstyle{definition}
\newtheorem{definition}[theorem]{Definition}

\theoremstyle{remark}

\numberwithin{equation}{section}

\numberwithin{theorem}{section}
\numberwithin{equation}{section}
\numberwithin{figure}{section}

\def\mean#1{\mathchoice%                                                             %integral mean
         {\mathop{\kern 0.2em\vrule width 0.6em height 0.69678ex depth -0.58065ex
                 \kern -0.8em \intop}\nolimits_{\kern -0.4em#1}}%
         {\mathop{\kern 0.1em\vrule width 0.5em height 0.69678ex depth -0.60387ex
                 \kern -0.6em \intop}\nolimits_{#1}}%
         {\mathop{\kern 0.1em\vrule width 0.5em height 0.69678ex
             depth -0.60387ex
                 \kern -0.6em \intop}\nolimits_{#1}}%
         {\mathop{\kern 0.1em\vrule width 0.5em height 0.69678ex depth -0.60387ex
                 \kern -0.6em \intop}\nolimits_{#1}}}

\def\N{\mathbb N}

\def\R{\mathbb R}

\def\eps{\varepsilon}

\def\1{\mathbf 1}

\def\H{\mathcal H}

\def\F{\mathcal F}

 \DeclareMathOperator{\dist}{dist}

\title[Singular sets in free interface problems]{A note on the dimension of the singular set\\ in free interface problems}

\author[G.~De Philippis]{Guido De Philippis}
\address{Institut f\"ur Mathematik, Universit\"at Z\"urich --
CH-8057 Z\"urich}
\email{guido.dephilippis@math.uzh.ch}

\author[A. Figalli]{Alessio Figalli}
\address{Department of Mathematics,
The University of Texas at Austin, 1 University Station C1200,
Austin TX 78712, USA}
\email{figalli@math.utexas.edu}
\keywords{}

\begin{document}

\begin{abstract}
The aim of this note is to investigate the size of the singular set of a general class of 
free interface problems. We show porosity of the singular set, obtaining as a corollary that
both its Hausdorff and Minkowski dimensions are strictly smaller than $n-1$.
\end{abstract}

\maketitle

\section{Introduction}
Let \(\Omega\subset \R^n\) be an open set. Given  \(E\subset \Omega\) and \(u\in W^{1,2}(\Omega)\) we define 
\begin{equation}\label{defF}
\mathcal F^{\alpha,\beta}_\eps (u,E):=\varepsilon\, P(E,\Omega)+\int_{\Omega} a_E(x)|\nabla u|^2,
\end{equation}
where  \(a_E(x):=\beta \1_E(x)+\alpha \1_{\Omega\setminus E}(x)\),  \(\varepsilon\in (0,1)\) and \(0<\alpha<\beta<\infty\) are given constants, and $P(E,\Omega)$ denotes the perimeter of $E$ relative to $\Omega$.
We are interested in the regularity of \((\Lambda,r_0)\)-minimizers of \(\F\) in \(\Omega\), namely in couples \((u,E)\) such that
\begin{equation}\label{lambda}
\F^{\alpha,\beta}_\eps(u, E)\le \F^{\alpha,\beta}_\eps (v,E)+\Lambda|E\Delta F|
\end{equation}
for all \(F\subset \Omega\), \(v\in W^{1,2}(\Omega)\)  such that \(E\Delta F\Subset B_{x,r}\Subset \Omega\), \(u=v\) on \(\Omega\setminus B_{x,r}\), \(r\le r_0\). Minimizers and \((\Lambda, r_0)\)-minimizers of \(\mathcal F_\eps^{\alpha,\beta}\) naturally arise in several problems
from material sciences, see \cite{AB, EF, F, FK} and references therein.

\medskip
In \cite{F}  it has been established that if \((u,E)\) is \((\Lambda,r_0\))-minimizer, then \(\partial E\) is regular outside a relatively closed set of vanishing \(\mathcal H^{n-1}\) measure (here and in the sequel,
$\H^{n-1}$ denotes the $(n-1)$-dimensional Hausdorff measure). More precisely if we define the regular set 
\begin{equation}\label{defreg}
{\rm  Reg} (E):=\big\{x\in \partial E\cap \Omega: \textrm{ \(\partial E\) is a \(C^{1,\gamma}\) hypersurface in a neighborhood  of \(x\) for some \(\gamma\in (0,1)\)} \big\}\,
 \end{equation}
and the singular set 
\begin{equation}\label{defsing}
\Sigma(E):=(\partial E\cap \Omega) \setminus {\rm  Reg} (E)\,,
\end{equation}
then \(\H^{n-1}(\Sigma (E))=0\), see Section \ref{partial} below for a more detailed discussion. 
%\footnote{Actually the theory in \cite{} has been done only for plain minimizers (i.e for\((0,\infty)\)-minimizer with our notations) however, since the term \(|E\Delta F|\) appearing in \eqref{lambda} is of higher order with respect to the perimeter, it can be repeat verbatim to obtain  regularity also in the case of \((\Lambda,r_0)\)-minimizers.}. 
On the other hand  nothing is known concerning  the Hausdorff dimension of \(\Sigma (E)\). In this note we will address this issue by proving the following:

\begin{theorem}\label{thm:main1} There is a constant \(\kappa=\kappa(n,\beta/\alpha)>0\) such that,  for every   \((\Lambda,r_0)\)-minimizer of \(\F^{\alpha,\beta}_\eps\),
\begin{equation}\label{eq:haus}
\dim_{\H} \Sigma (E)\le n-1-\kappa.
\end{equation}
\end{theorem}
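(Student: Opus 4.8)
The strategy is to reduce the dimension estimate to a uniform porosity property of $\Sigma(E)$ relative to $\partial E$, established through an $\eps$-regularity theorem and a blow-up argument. First I would collect, from \cite{F} and Section~\ref{partial}, the basic estimates for $(\Lambda,r_0)$-minimizers: the density estimates, which make $\partial E$ an $(n-1)$-Ahlfors regular set ($c\,r^{n-1}\le \H^{n-1}(\partial E\cap B_r(x))\le C\,r^{n-1}$ for $x\in\partial E$ and $r\le r_0$); the energy bound $\int_{B_r(x)}|\nabla u|^2\le C\,r^{n-1}$; and, crucially, the $\eps$-regularity statement, namely that there exist $\eps_0=\eps_0(n,\beta/\alpha)>0$ and $\gamma\in(0,1)$ such that if the spherical excess $\mathbf e(x,r)$ of $\partial E$ in $B_r(x)$ satisfies $\mathbf e(x,r)\le\eps_0$, then $\partial E\cap B_{r/2}(x)\subset\mathrm{Reg}(E)$. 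In particular $\Sigma(E)$ is exactly the set of points at which $\mathbf e(\cdot,r)>\eps_0$ for every $r\le r_0$. I would also record the already known fact that $\H^{n-1}(\Sigma(E))=0$ for every minimizer, so that $\mathrm{Reg}(E)$ is open and dense in $\partial E$.

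Next I would analyse the rescalings $E_{x,r}:=(E-x)/r$ together with $u_{x,r}(z):=r^{-1/2}u(x+rz)$. A direct computation shows $\F^{\alpha,\beta}_\eps(u,E;B_r(x))=r^{n-1}\F^{\alpha,\beta}_\eps(u_{x,r},E_{x,r};B_1)$, so that $(u_{x,r},E_{x,r})$ is a $(\Lambda r,r_0/r)$-minimizer of the \emph{same} functional (and one may normalise $\eps=1$ by rescaling $u$). Since the energy bound yields $\int_{B_1}|\nabla u_{x,r}|^2\le C$, the rescalings are precompact: every sequence $(u_{x_k,r_k},E_{x_k,r_k})$ has a subsequence converging, in $L^1_{loc}\times W^{1,2}_{loc}$, to a \emph{local minimizer} $(u_\infty,E_\infty)$ of $\F^{\alpha,\beta}_\eps$, the parameter $\Lambda r_k$ tending to $0$; moreover the density estimates upgrade this to Hausdorff convergence of the boundaries and to convergence of the excess.

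The core step is to prove that there is $\delta_0=\delta_0(n,\beta/\alpha)>0$ such that for every $x\in\Sigma(E)$ and every $r\le r_0$ there is $y\in\partial E\cap B_r(x)$ with $\Sigma(E)\cap B_{\delta_0 r}(y)=\emptyset$. I would argue by contradiction: were this to fail with $\delta=1/k\to0$, I would rescale at the offending pairs $(x_k,r_k)$, obtaining minimizers whose singular sets $\Sigma_k\ni0$ are $(1/k)$-dense in $B_1$. Passing to a limit $(u_\infty,E_\infty)$ as above, $E_\infty$ is the interface of a local minimizer, whence $\H^{n-1}(\Sigma(E_\infty))=0$ and $\mathrm{Reg}(E_\infty)$ is dense in $\partial E_\infty\ni0$. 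Choosing a regular, hence locally flat, point $z_\infty\in\partial E_\infty\cap B_{1/2}$ and a radius $\rho$ with $\mathbf e(E_\infty;z_\infty,2\rho)\ll\eps_0$, the convergence of the excess together with $\eps$-regularity forces $\partial E_k\cap B_\rho(z_k)\subset\mathrm{Reg}(E_k)$, i.e. $\Sigma_k\cap B_\rho(z_k)=\emptyset$, for $k$ large; this contradicts the $(1/k)$-density of $\Sigma_k$ as soon as $1/k<\rho$. Scale invariance and the compactness of the class of blow-up limits make $\delta_0$ depend only on $n$ and $\beta/\alpha$.

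Finally I would combine the porosity of $\Sigma(E)$ inside the $(n-1)$-Ahlfors regular set $\partial E$ with the classical covering estimate for porous sets: each void $B_{\delta_0 r}(y)$ removes a fixed proportion of the $(n-1)$-dimensional content contained in $B_r(x)$, so iterating over dyadic scales yields $\dim_\H\Sigma(E)\le n-1-\kappa$ with $\kappa=\kappa(n,\delta_0)>0$ (and the same covering bounds the Minkowski dimension). I expect the porosity step to be the main obstacle, and within it the two rigidity points of the blow-up: verifying that the limit $(u_\infty,E_\infty)$ is genuinely a minimizer of $\F^{\alpha,\beta}_\eps$, so that the known $\H^{n-1}(\Sigma(E_\infty))=0$ applies, and establishing the upper semicontinuity $\limsup_k\Sigma_k\subset\Sigma(E_\infty)$, which is exactly where one needs the stability of the $\eps$-regularity threshold under the convergence $E_k\to E_\infty$.
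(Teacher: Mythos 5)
Your overall architecture coincides with the paper's: prove that $\Sigma(E)$ is porous in $\partial E$ by a contradiction/compactness argument hinging on the $\eps$-regularity theorem (Theorem \ref{eps0}) and on $\H^{n-1}(\Sigma)=0$ for the blow-up limit, then combine porosity with Ahlfors regularity of $\partial E$ and a dyadic covering argument to get $\dim_\H\Sigma(E)\le n-1-\kappa$. This is exactly the route of Theorem \ref{thm:main2}, Lemma \ref{conv} and Lemma \ref{poro}.

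There is, however, one concrete gap: you list the bound $\int_{B_r(x)}|\nabla u|^2\le C\,r^{n-1}$ among the ``basic estimates'' to be collected from \cite{F}, and your compactness step relies on it. This bound is not available at all scales and is not elementary: the comparison with $(u,E\setminus B_{x,\varrho})$ in \eqref{eq:upperdensity} only controls $\int_{E\cap B_{x,\varrho}}|\nabla u|^2$, not the full Dirichlet energy on the ball, and after the vertical rescaling that normalizes $\eps=1$ the total energy is of size $\alpha D_u(1)/\eps$, which blows up as $\eps\to 0$. Establishing that $D_u(x,\varrho)\le C_1(n,\beta/\alpha)$ \emph{below a threshold scale} depending on $\eps$, $\Lambda$, $r_0$ and $D_u(1)$ is precisely the content of the paper's Lemmas \ref{alt} and \ref{energybound} (a decay dichotomy proved by harmonic replacement, followed by an iteration over dyadic scales), and it is what makes the porosity constant --- hence $\kappa$ --- depend only on $n$ and $\beta/\alpha$. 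Correspondingly, your claim that porosity holds ``for every $x\in\Sigma(E)$ and every $r\le r_0$'' with $\delta_0=\delta_0(n,\beta/\alpha)$ is too strong: it can only be asserted for $r\le\hat\varrho$ with $\hat\varrho$ depending also on $\Lambda/\eps$ and $\alpha D_u(1)/\eps$ (this is why Definition \ref{def:porosity} carries the threshold $\hat\varrho$). For the Hausdorff dimension bound this restriction is harmless, since dimension is a small-scale notion, but without the energy-decay lemmas your blow-up sequence has no uniform energy bound and the compactness step does not go through. A second, minor, imprecision: the lower density estimate (Lemma \ref{lowerdensity}), which you also take as given, itself requires the uniform energy bound first, so the logical order of your ``preliminaries'' must be rearranged.
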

Note that \(\kappa\) depends only on $n$ and  \(\beta/\alpha\) but not on \(\eps\), see also the comments after Theorem \ref{thm:main2} below.

\medskip

A well-known classical strategy to study the dimension of singular sets in geometric   problem is the study of blow-ups of minimizers around a singular point. If one is able to classify the singularities of blow-ups then, applying the so-called Federer dimension reduction argument (see for instance \cite[Appendix]{Si}), one can obtain estimates on the dimension of the singular set of a generic minimizers. In order to apply this strategy one needs to have some monotonicity formula at hand. Indeed, roughly speaking, a monotonicity (or almost monotonicity) formula allows one to  classify blow-ups limits in a sufficiently precise way to understand the dimension of their singularities. For minimizers of \eqref{defF} monotonicity formulas are known only under some  very restrictive assumptions (see \cite[Lemma 3.1]{F}) and thus are not suitable to study blows limit. To prove Theorem \ref{thm:main1} we will then follow a different route, namely we will show that \(\Sigma (E)\) is $\sigma$-\emph{porous} in \(\partial E\) for some \(\sigma=\sigma(n,\beta/\alpha)>0\), see Definition \ref{def:porosity}. From this fact Theorem \ref{thm:main1} will follow by classical results in measure theory, see Lemma \ref{poro}. In particular Theorem \ref{thm:main1} will be a consequence of Theorem \ref{thm:main2} below. Moreover, by using Lemma \ref{poro2} one can actually provide an estimate on the Minkowski content of \(\Sigma(E)\).

To explain Theorem \ref{thm:main2}  we need the following definition:

\begin{definition}\label{def:porosity}Given \(\Sigma\subset K\subset B_1\),  we say that \(\Sigma\) is \((\sigma, \hat \varrho)\)-porous in \(K\) if the following holds: For every \(x\in K\) and every \(\varrho \le \hat \varrho\), there exists \(y\in K\cap B_{x,\varrho}\) and \(r \in (\sigma \varrho , \varrho) \) such that
\begin{equation*}
B_{y,r}\cap K \subset K\setminus \Sigma.	
\end{equation*}
\end{definition}
Let us also introduce the following notation, which will be useful in the sequel. Given \((u,E)\)  a \((\Lambda,r_0)\)-minimizer of \(\F\) in \(B_1\), \(x\in B_1\), and \(\varrho\le \dist(x,\partial B_1)\), we define the normalized Dirichlet energy of \(u\) as
\begin{equation}\label{eq:dir}
D_u (x,\varrho):=\frac{1}{\varrho^{n-1}} \int_{B_{x,\varrho}} | \nabla u|^2.
\end{equation}
In case $x$ is the origin we will simply write \(D_u(\varrho)\). We can now state the  main result of this paper

\begin{theorem}\label{thm:main2} There is a positive constant \(\sigma=\sigma(n,\beta/\alpha)\) such that  the following holds: For  every  \((\Lambda,r_0)\)-minimizer of \(\F^{\alpha,\beta}_\eps \) in \(B_1\) there exists a radius \(\hat \varrho=\hat \varrho\big(n,r_0,\,\Lambda/\eps, \,\alpha D_u(1)/\eps \big)>0 \) such that \(\Sigma(E)\cap \overline B_{1/2}\) is \((\sigma,\hat \varrho)\)-porous in \(\partial E\cap B_1\).
\end{theorem}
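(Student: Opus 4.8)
\medskip

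\noindent\textbf{Proof strategy.} The backbone of the argument is the $\eps$-regularity theorem underlying the partial regularity result of \cite{F}, which I will invoke in the quantitative form: there are constants $\eps_0=\eps_0(n,\beta/\alpha)>0$ and $\gamma\in(0,1)$ such that, writing
\[
\mathbf e(y,s):=\frac{1}{s^{n-1}}\min_{|\nu|=1}\int_{\partial^*E\cap B_{y,s}}\frac{|\nu_E-\nu|^2}{2}\,d\H^{n-1}
\]
for the spherical excess, the smallness of the combined quantity
\[
\Psi(y,s):=\mathbf e(y,s)+\frac{\alpha}{\eps}\,D_u(y,s)+\frac{\Lambda}{\eps}\,s\le\eps_0
\]
at a \emph{single} ball $B_{y,s}\Subset B_1$ with $y\in\partial E$ already forces $\partial E\cap B_{y,s/2}\subset\mathrm{Reg}(E)$. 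Granting this, porosity reduces to a purely geometric task: for every $x\in\Sigma(E)\cap\overline B_{1/2}$ and every $\varrho\le\hat\varrho$ I must exhibit a point $y\in\partial E\cap B_{x,\varrho}$ and a radius $s$ comparable to $\varrho$ with $\Psi(y,s)\le\eps_0$; the porosity ball of Definition \ref{def:porosity} is then $B_{y,s/2}$, and $\sigma$ is fixed accordingly.

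Before searching for such a ball I would record the two scale–free inputs. The density estimates for $(\Lambda,r_0)$-minimizers give $c_n s^{n-1}\le P(E,B_{y,s})\le C_n s^{n-1}$ for $y\in\partial E$ and $s\le r_0$, which let me simultaneously cover $\partial E\cap B_{x,\varrho/2}$ by $\lesssim\sigma^{1-n}$ balls of radius $\sigma\varrho$ and pack $\gtrsim\sigma^{1-n}$ disjoint such balls. Next, testing \eqref{lambda} with the same $u$ and a variation of $E$ shows that $E$ is an almost–minimizer of the perimeter whose deficit on $B_{y,s}$ is at most $\tfrac{\beta-\alpha}{\eps}\int_{B_{y,s}}|\nabla u|^2+\tfrac{\Lambda}{\eps}|B_{y,s}|$; De~Giorgi's excess–improvement then yields $\mathbf e(y,\tau s)\le C\tau^{2\gamma}\mathbf e(y,s)+C\big(\tfrac{\beta-\alpha}{\eps}D_u(y,s)+\tfrac{\Lambda}{\eps}s\big)$ once $\mathbf e(y,s)$ is below a dimensional threshold. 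Combining this with the fact that, across a nearly flat interface, the two–phase function $u$ solves a transmission problem whose energy decays harmonically, one obtains a one–step contraction $\Psi(y,\tau s)\le\tfrac12\Psi(y,s)$ whenever $\Psi(y,s)\le\eps_1$ and $s\le\hat\varrho$, with $\eps_1$ and $\tau$ depending only on $(n,\beta/\alpha)$ — precisely the source of the claimed dependence of $\sigma$. In particular the condition $\Psi(y,\cdot)\le\eps_1$ is stable under shrinking the radius, so $x\in\Sigma(E)$ forces $\Psi(x,s)>\eps_1$ at \emph{every} scale $s\le\hat\varrho$.

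The heart of the proof is then to locate, inside $B_{x,\varrho}$, a boundary point $y$ at which $\Psi(\cdot,\sigma\varrho)$ drops below $\eps_1$. The natural attempt is a packing/Chebyshev argument: if $\Psi(y,\sigma\varrho)>\eps_1$ held on all of $\partial E\cap B_{x,\varrho/2}$, I would sum over the disjoint packing and bound the total Dirichlet mass by the finite quantity $\int_{B_1}|\nabla u|^2\le\tfrac{\eps}{\alpha}D_u(1)$ and the total excess at scale $\sigma\varrho$ via the excess–improvement of the previous paragraph, hoping to contradict the lower density bound once $\sigma$ is small and $\varrho\le\hat\varrho$. Carrying this out and then applying the $\eps$-regularity theorem on the surviving good ball gives $\partial E\cap B_{y,\sigma\varrho/2}\subset\mathrm{Reg}(E)$, i.e.\ the porosity property with $r=\sigma\varrho/2$.

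The step I expect to be the genuine obstacle is exactly this last one, and it is where the absence of a monotonicity formula bites. A naive single–scale average of $\Psi(\cdot,\sigma\varrho)$ is scale–invariant and merely reproduces $\Psi(x,\varrho)$, which is large at the singular centre; summing over the $\sim\log(1/\sigma)$ intermediate scales does not help either, since the logarithmic factor cancels. To create a real gain one must instead use the excess–improvement to guarantee that the \emph{total} tilt–excess at scale $\sigma\varrho$ is a small multiple of $\varrho^{n-1}$ — rather than merely bounded by it — so that the packing count genuinely leaves room for a flat, low–energy ball; it is in making this quantitative, and in choosing $\sigma=\sigma(n,\beta/\alpha)$ small together with $\hat\varrho=\hat\varrho(n,r_0,\Lambda/\eps,\alpha D_u(1)/\eps)$ small enough that the perimeter term dominates the Dirichlet perturbation at the working scale, that the bulk of the technical work lies.
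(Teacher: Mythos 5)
Your reduction of porosity to finding, inside each $B_{x,\varrho}$, a boundary point $y$ and a scale $s\sim\sigma\varrho$ at which the combined quantity $\Psi(y,s)$ drops below the $\eps$-regularity threshold is the right framing, and your preliminary normalizations (the factors $\alpha/\eps$ and $\Lambda/\eps$, the choice of $\hat\varrho$ so that the Dirichlet energy is under control at the working scale) match what is needed. But the argument has a genuine gap at exactly the step you flag: you never actually produce the good ball. As you yourself compute, a single-scale Chebyshev/packing argument is scale-invariant and gives no contradiction (the total excess over a disjoint packing at scale $\sigma\varrho$ is bounded by $C\varrho^{n-1}$, which is the same order as what $\sim\sigma^{1-n}$ bad balls would contribute), and the proposed fix --- that excess improvement should force the total tilt-excess at scale $\sigma\varrho$ to be a \emph{small} multiple of $\varrho^{n-1}$ --- is asserted as a hope, not proved. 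It is in fact the crux: the excess-decay you quote only applies at points where $\Psi$ is already below threshold, so it cannot be used to estimate the total excess near a singular centre without already knowing where the good points are. Knowing $\H^{n-1}(\Sigma(E))=0$ guarantees good balls exist at \emph{some} scale around $\H^{n-1}$-a.e.\ boundary point, but gives no lower bound on that scale, and without a monotonicity formula there is no obvious quantitative substitute.

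The paper closes precisely this gap by a soft compactness argument rather than a quantitative multi-scale one. After the same normalizations (vertical scaling to $\eps=1$, and Lemma \ref{energybound} to get the universal bound $D_v(x,\varrho)\le C_1$ below scale $\hat\varrho$), one argues by contradiction: if the porosity ratio $p(x_k,\varrho_k)\to 0$ along a sequence of minimizers, the horizontal rescalings are $(1,1)$-minimizers with uniformly bounded Dirichlet energy, hence precompact (Lemma \ref{conv}); the limit is again a minimizer, so $\H^{n-1}(\Sigma(F_\infty))=0$ and $\partial F_\infty\cap B_{1/2}$ contains a regular point; and regularity is an open condition stable under the convergence (via Theorem \ref{eps0}), so for large $k$ the boundaries $\partial F_k$ contain a regularity ball of a \emph{fixed} radius $\overline\varrho$, contradicting $p_k\to 0$. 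The universal scale you are missing is thus extracted by compactness, not by averaging; to keep a purely quantitative route you would essentially have to prove a uniform version of ``$\Sigma$ has vanishing measure'' over the whole normalized class of minimizers, which is exactly what the contradiction argument delivers for free.
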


Note that \(\sigma\) depends only on $n$ and \(\beta/\alpha\). This is crucial in showing that the constant \(\kappa\) appearing in Theorem \ref{thm:main1} depends only on $n$ and \(\beta/\alpha\) as well. On the other hand the redius  \(\hat \varrho\)  shall be thought as a \emph{regularity scale}, i.e., as the scale at which the perimeter term becomes dominant. In this respect the fact that it depends also on  $\eps$, $\Lambda$, and \(D_u(1)\) is quite natural.
This can be seen for instance by looking at the asymptotic behavior of the family of minimizers of the following problems as \(\eps \to 0\):
\begin{equation}\label{Peps}\tag{\(P_\eps\)}
\min\Big\{ \F^{\alpha, \beta}_\eps(u,E):\qquad |E|=|\Omega|/2\quad u=u_0\textrm{ on \(\partial \Omega\)} \Big\}
\end{equation}
where $\Omega=[0,1]^2$ is the unit square in $\R^2$ and \(u_0=x_1\).
In this case minimizers exhibit finer and finer microstructures, and the gradient jumps along a finer and finer family of curves which propagate in the direction $e_2$

We conclude this first section by recalling that the use of porosity in the study of the dimension of the singular set of minimizers of variational problems when no monotonicity formulas are available   already appeared in \cite{KM} for minimizers of quasi-convex functionals, and in   \cite{Dms, MSms, Rms} concerning the Mumford-Shah functional. In particular, by using the porosity of the singular set of minimizers of the Mumford-Shah functional, the authors have been able to prove in \cite{DF} a  higher integrability property of the gradients of the minimizers conjectured by De Giorgi in the 90's.

\medskip

The proof of Theorem \ref{thm:main2} is based on the following idea which we believe can be applied also to different types of problems.

First, to remove the dependence on $\eps$ we notice that
by scaling one can reduce itself to the case $\eps=1$,
the price to pay being that the size of the Dirichlet energy increases by  a factor $1/\eps$.
Then, by a comparison argument we show that either the Dirichlet energy is below a fixed threshold $C_0$ or it decays geometrically (see Lemma \ref{alt}). By scaling this implies that, below a suitable scale
that can depend on $\eps$,
one has reduced itself to the case $\eps=1$ and $D_u(x,\rho)\leq C_1$ (see Lemma \ref{energybound}),
hence removing the dependence on $\eps$.

We then prove the porosity result. For this we observe that, by known excess-type regularity theorems (see Theorem \ref{eps0} and \cite[Section 5]{F})
it follows that ${\rm Reg}(E)$ is an open relatively to $\partial E$
and $\H^{n-1}(\Sigma(E))=0$.
This implies in particular that, given a \((\Lambda,r_0)\)-minimizer in $B_1$ with
$\eps=1$ and $D_u(1)\leq C_1$, there exists a ball inside $B_{1/2}$ where $\partial E$ is regular.
Our observation is that, using a simple contradiction argument 
based on the compactness of \((\Lambda,r_0)\)-minimizers, the radius of this ``regularity ball'' is universal. This fact combined with the fact the  \((\Lambda,r_0)\)-minimizers are invariant under scaling
allows to transfer this information inside any ball and prove that, inside any ball $B_\rho(x)$,
there exists a ball with comparable radius where $\partial E$ is smooth
(see the proof of Theorem \ref{thm:main2} in Section \ref{3}).
This concludes the proof of the porosity of the singular set.

It is worth pointing out that this argument is robust enough that can be applied also 
to the family of anisotropic energies considered in \cite{FK}.

\medskip
This paper is organized as follows:  in Section \ref{2} we summarize some known results concerning minimizers \((\Lambda,r_0)\)-minimizers and we prove some preliminary lemmas with a particular attention in underlining the dependence of the constants on the parameters. Then, in Section \ref{3} we provide the proofs of Theorems \ref{thm:main1} and \ref{thm:main2}.

\medskip

After the writing of this paper was completed, we learned that Fusco and Julin \cite{FJ} had just 
obtained similar results with related but somehow different techniques.

\section{Preliminaries and technical lemmas.}\label{2}
In this section we prove some technical lemmas that we will need in the sequel. 
\subsection{Scaling}\label{subsection:scaling} Let \((u,E)\) be a \((\Lambda,r_0)\)-minimizer of \(\F^{\alpha,\beta}_\eps\) in \(B_1\). Then:

\medskip
\noindent
\(\bullet\)\emph{Horizontal scaling:} For every \(x\in B_{1}\) and \(r \le \dist(x,\partial B_1)\), let us define  \(u^{x,r}(y):=r^{-1/2}u(x+ry)\) and \(E^{x,r}:=(E-x)/r\). Then \((u^{x,r}, E^{x,r})\) is a \((\Lambda r, r_0/r)\)-minimizer of \(\F^{\alpha,\beta}_\eps\) in \(B_1\). Note also that
\begin{equation}\label{dirhor}
D_{u}(x,\varrho)=D_{u^{x,r}}(0,\varrho/r).
\end{equation}

\medskip
\noindent
\(\bullet\)\emph{Vertical scaling:} For every \(\mu>0\), define \(v(x):=\sqrt{\mu} u(x)\). Then \(( v,E)\) is  a \(( \Lambda/ \eps , r_0)\)-minimizer of \(\F_{1}^{\alpha/\mu\eps , \beta /\mu\eps}\) and 
\begin{equation}\label{dirvert}
D_{v}(x,\varrho)=\mu D_{u}(x,\varrho).
\end{equation}
%In particular,  up to substitute \(u\) with \(\sqrt{\alpha/\eps}\, u^{0,\varrho_0}\)  and \(E\) with \(E^{0,\varrho_0}\)  with 
%\(\varrho_0=\min\{\eps, r_0,1\}\),  in the sequel we will always work under the following assumption:   
%
%\noindent
%\begin{equation}\label{A}\tag{A} 
%\textrm{\((u,E)\) is a \((\Lambda,1)\) minimizer of \(\F^{1,\beta/\alpha}_1\) in \(B_1\).}
%\end{equation}
%
% Note that under this transformation
%\begin{equation}\label{dirscala1}
%D_v(1)=\frac{\alpha}{\eps} D_u(0,\varrho_0)\le C\big(\alpha, \eps, r_0, D_u(1)\big). 
%\end{equation}

\subsection{Upper density estimates}
\label{subsection:density}
Let \((u,E)\) be a    \((\Lambda, r_0)\)-minimizer of \(\F^{1,\beta/\alpha}_1\) in \(B_1\).
Testing the minimality of \((u,E)\) with \((u, E\setminus B_{x,\varrho})\) we obtain
\begin{equation}\label{eq:upperdensity}
P(E,B_{x,\varrho})+\frac{\beta-\alpha}{\alpha} \int_{ E\cap B_{x,\varrho} }|\nabla u|^2\le n\omega_n \varrho^{n-1}+\Lambda \varrho^n \qquad \forall \, x\in B_{1}, \quad \varrho \le  \min\{\dist(x,\partial B_1),r_0\}.
\end{equation}

\subsection{The equation for  $u$ and some consequences} Let \((u,E)\) be a    \((\Lambda, r_0)\)-minimizer of \(\F^{1,\beta/\alpha}_1\) in \(B_1\).
If we test the minimality of \((u,E)\) with \((u+ \eta \varphi, E)\), \(\varphi\in W^{1,2}_{0}(B_{x,\varrho})\), and we let \(\eta \to 0\),  we get
\begin{equation}\label{eq:equ}
\beta \int_{E\cap B_{x,\varrho}} \nabla u\cdot \nabla \varphi+ \alpha \int_{ B_{x,\varrho}\setminus E} \nabla u\cdot \nabla \varphi=0\qquad \forall \, \varphi\in W^{1,2}_{0}(B_{x,\varrho}).
\end{equation}
In particular, if \(v\) denotes the harmonic function with the some boundary data of \(u\) on \(\partial B_{x,r}\),
plugging \(\varphi:=u-v\) in \eqref{eq:equ}  and using that 
\[
 \int_{ B_{x,\varrho}} \nabla v\cdot (\nabla u-\nabla v)=0,
\]
we obtain 
\begin{equation}\label{eq:harcomp}
\int_{B_{x,\varrho}} |\nabla u-\nabla v|^2\le \frac{(\beta-\alpha)^2}{\alpha^2}\int_{B_{x,\varrho}\cap E} |\nabla u|^2.
\end{equation}
We can now prove the following Lemma, see also  \cite[Lemma 2.2]{F}.

\begin{lemma}\label{alt}
Let \((u,E)\) be a    \((\Lambda, r_0)\)-minimizer of \(\F^{1,\beta/\alpha}_1\) in \(B_1\).
There exists a constant  \(C_0=C_0(n,\beta/\alpha)>0\) such that 
\begin{align}
\label{bound}
\textrm{either}&&&  D_{u}(x,\varrho)\le  C_0\\
\label{decade}
\textrm{or}&&&D_{u}(x, \varrho/16 )\le  \frac{1}{4} D_{u}(x, \varrho)
\end{align}
for every \(x\in B_{1}\) and \(\varrho\le\min\{\dist(x,\partial B_1),r_0,1/\Lambda\}\).
\end{lemma}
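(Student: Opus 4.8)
The plan is to compare $u$ with its harmonic replacement on $B_{x,\varrho}$ and convert the two a priori estimates derived just above into the stated dichotomy. Fix $x,\varrho$ as in the statement and let $v$ be the harmonic function in $B_{x,\varrho}$ agreeing with $u$ on $\partial B_{x,\varrho}$. The first and decisive point is that the Dirichlet energy carried by the phase $E$ is \emph{universally} bounded: since $\varrho\le 1/\Lambda$ gives $\Lambda\varrho^n\le\varrho^{n-1}$, discarding the perimeter term in \eqref{eq:upperdensity} yields $\varrho^{1-n}\int_{E\cap B_{x,\varrho}}|\nabla u|^2\le (n\omega_n+1)/(\beta/\alpha-1)$. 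Plugging this into the harmonic comparison \eqref{eq:harcomp} I would obtain
\[
\frac{1}{\varrho^{n-1}}\int_{B_{x,\varrho}}|\nabla u-\nabla v|^2\le (\beta/\alpha-1)(n\omega_n+1)=:K,
\]
a constant depending only on $n$ and $\beta/\alpha$. Hence, no matter how large $D_u(x,\varrho)$ may be, $u$ is quantitatively $L^2$-close to the harmonic function $v$ at scale $\varrho$.

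Next I would invoke the sharp energy decay of harmonic functions, $\int_{B_{x,t}}|\nabla v|^2\le(t/\varrho)^n\int_{B_{x,\varrho}}|\nabla v|^2$ for $t\le\varrho$ (which follows from the orthogonal decomposition of $v$ into homogeneous harmonic components), together with the energy minimality $\int_{B_{x,\varrho}}|\nabla v|^2\le\int_{B_{x,\varrho}}|\nabla u|^2$. Taking $t=\varrho/16$ these give $D_v(x,\varrho/16)\le\frac1{16}D_v(x,\varrho)\le\frac1{16}D_u(x,\varrho)$. Combining this with the closeness estimate via the $L^2$ triangle inequality, and using $B_{x,\varrho/16}\subset B_{x,\varrho}$ together with $(\varrho/16)^{1-n}=16^{n-1}\varrho^{1-n}$ to handle the $u-v$ term on the smaller ball, I would arrive at
\[
D_u(x,\varrho/16)^{1/2}\le\frac14\,D_u(x,\varrho)^{1/2}+\sqrt{16^{n-1}K}.
\]

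Finally I would set $C_0:=16^n K$ and split into cases. If $D_u(x,\varrho)\le C_0$ we are precisely in alternative \eqref{bound}. Otherwise $D_u(x,\varrho)^{1/2}>\sqrt{C_0}=4\sqrt{16^{n-1}K}$, so the additive error satisfies $\sqrt{16^{n-1}K}<\frac14 D_u(x,\varrho)^{1/2}$; the previous display then forces $D_u(x,\varrho/16)^{1/2}\le\frac12 D_u(x,\varrho)^{1/2}$, which is alternative \eqref{decade} after squaring. Since $K$ and $C_0$ depend only on $n$ and $\beta/\alpha$, this is exactly the claimed dichotomy.

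The delicate point is the bookkeeping of constants rather than any single hard estimate. A crude use of $(a+b)^2\le 2a^2+2b^2$ would introduce a factor $2$ that destroys the contraction, so I would work with the genuine $L^2$ triangle inequality and, crucially, with the \emph{sharp} harmonic decay constant $1$ in the $t^n$-law rather than a generic dimensional constant. This makes the leading coefficient exactly $\frac14<\frac12$ and leaves just enough room for the threshold $C_0$ to absorb the additive error $\sqrt{16^{n-1}K}$; any weaker decay constant $\ge 4$ would force one to enlarge the radius ratio $16$ appearing in \eqref{decade}.
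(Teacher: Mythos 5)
Your proof is correct and uses the same three ingredients as the paper's: the upper density estimate \eqref{eq:upperdensity} to control $\int_{E\cap B_{x,\varrho}}|\nabla u|^2$, the harmonic comparison \eqref{eq:harcomp}, and the decay $\int_{B_{x,t}}|\nabla v|^2\le (t/\varrho)^n\int_{B_{x,\varrho}}|\nabla v|^2$ together with the minimality $\int_{B_{x,\varrho}}|\nabla v|^2\le\int_{B_{x,\varrho}}|\nabla u|^2$. The one genuine difference is organizational: you bound the error $\varrho^{1-n}\int_{B_{x,\varrho}}|\nabla u-\nabla v|^2$ by an \emph{absolute} constant $K(n,\beta/\alpha)$ and then run the $L^2$ triangle inequality to get the affine recursion $D_u(x,\varrho/16)^{1/2}\le\tfrac14 D_u(x,\varrho)^{1/2}+\sqrt{16^{n-1}K}$, from which the dichotomy follows with the explicit threshold $C_0=16^nK$; the paper instead assumes that \eqref{bound} fails from the outset, which converts $\varrho^{n-1}$ into $C_0^{-1}\int_{B_{x,\varrho}}|\nabla u|^2$ and makes the error \emph{relative}, and then uses the pointwise bound $|\nabla u|^2\le 2|\nabla v|^2+2|\nabla u-\nabla v|^2$. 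Both routes work, and yours has the minor advantage of an explicit $C_0$. Your closing worry that the factor $2$ from the crude quadratic inequality would ``destroy the contraction'' is, however, unfounded: with $\lambda=16$ the harmonic part then contributes $2\lambda^{n-1}\cdot\lambda^{-n}=1/8<1/4$, leaving ample room to absorb the error term, which is exactly how the paper closes the argument.
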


\begin{proof}Let \(C_0\gg1\) to be fixed and assume that for some \(x\in B_{1}\) and \(\varrho\le\min\{\dist(x,\partial B_1),r_0,1/\Lambda\}\) we have
\[
 \int_{B_{x,\varrho}} |\nabla u|^2\ge  C_0\varrho^{n-1},
\]
so that \eqref{bound} fails. By the above equation, \eqref{eq:upperdensity}, and using that  \(\Lambda \varrho\le 1 \), we get 
\[
\frac{\beta-\alpha}{\alpha}\int_{B_{x,\varrho}\cap E}|\nabla u|^2\le \frac{n\omega_n+1}{C_0} \int_{B_{x,\varrho}} |\nabla u|^2,
\]
that combined with \eqref{eq:harcomp} gives
\begin{equation}
\label{eq:uv close}
\int_{B_{x,\varrho}} |\nabla u-\nabla v|^2\le \frac{C(n,\beta/\alpha)}{C_0} \int_{B_{x,\varrho}} |\nabla u|^2,
\end{equation}
where \(v\) is the harmonic function with the same boundary data of \(u\) on \(\partial B_{x,\varrho}\).
We notice that as a consequence of the harmonicity of $v$ the function $|\nabla v|^2$ is subharmonic, hence
$$
\frac1{r^n}\int_{B_{x,r}}|\nabla v|^2 \leq \frac1{\varrho^n} \int_{B_{x,\varrho}}|\nabla v|^2 \qquad \forall\,r \in (0, \varrho).
$$
Applying this inequality with $r=\varrho/\lambda$ with $\lambda \geq 1$,
together with \eqref{eq:uv close} and the fact that $|\nabla u|^2\leq 2|\nabla v|^2+2|\nabla u-\nabla v|^2$, we deduce that
\begin{equation}\label{treno3}
\begin{split}
\frac{\lambda ^{n-1}}{\varrho^{n-1}}\int_{B_{x,\varrho/\lambda}}|\nabla u|^2&\le \frac{2 \lambda ^{n-1}}{\varrho^{n-1}} \int_{B_{x,\varrho/\lambda}} |\nabla u-\nabla v|^2+\frac{2\lambda ^{n-1}}{\varrho^{n-1}} \int_{B_{x,\varrho/\lambda}}|\nabla v|^2\\
&\le \frac{2\lambda ^{n-1}}{\varrho^{n-1}} \int_{B_{x,\varrho}} |\nabla u-\nabla v|^2+\frac{2}{\lambda \varrho^{n-1}} \int_{B_{x,\varrho}}|\nabla v|^2\\
 &\le  \frac{C(n,\beta/\alpha)\lambda^{n-1}}{C_0\varrho^{n-1}} \int_{B_{x,\varrho}} |\nabla u|^2+\frac{2}{\lambda \varrho^{n-1}} \int_{B_{x,\varrho}}|\nabla u|^2,\\
\end{split}
\end{equation}
where in the last inequality we have also used that \(\int_{B_{x,\varrho}} |\nabla v|^2\le \int_{B_{x,\varrho}} |\nabla u|^2\) since \(v\) is harmonic.  Choosing \(\lambda=16\)  we have
\[
\frac{2}{\lambda}=\frac{1} {2\lambda^{1/2}}\,,
\]
hence, if  \(C_0\gg1\) is sufficiently big to ensure that
\[
 \frac{C(n,\beta/\alpha)\lambda^{n-1}}{C_0}\le \frac{1} {2\lambda^{1/2}},
\]
\eqref{treno3} follows from \eqref{decade}.
\end{proof}

The above lemma allows us to show that below a certain scale (which depends only on the total energy \(D_u(1)\)) the normalized energy \(D_{u} (x,\varrho)\) is bounded only in terms of $n$ and \(\beta/\alpha\). This fact will be crucial in  showing that the constant \(\sigma\) appearing in Theorem \ref{thm:main2} (as well as the constant \(\kappa\) in Theorem \ref{thm:main1}) depends only on $n$ and \(\beta/\alpha\). 

\begin{lemma}\label{energybound}
Let \((u,E)\) be a    \((\Lambda, r_0)\)-minimizer of \(\F^{1,\beta/\alpha}_1\) in \(B_1\).
There exist a constant  \(C_1=C_1(n,\beta/\alpha)>0\) and a radius \(\varrho_1=\varrho_1(n,r_0,\Lambda, D_u(1))>0\) such that
\begin{equation}\label{universalbound}
D_{u} (x,\varrho )\le C_1\qquad \forall \,x\  \in B_{1/2}, \quad \forall\,\varrho \le \varrho_1. 
\end{equation}
\end{lemma}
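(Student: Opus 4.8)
The plan is to combine the dichotomy of Lemma~\ref{alt} with the elementary scaling monotonicity of the normalized Dirichlet energy to build a uniform barrier. Fix \(x\in B_{1/2}\) and set \(\bar\varrho:=\min\{1/2,r_0,1/\Lambda\}\); since \(\dist(x,\partial B_1)\ge 1/2\), the dichotomy \eqref{bound}--\eqref{decade} is available at every scale \(\varrho\le\bar\varrho\). A crude estimate gives, uniformly in \(x\),
\[
D_u(x,\bar\varrho)=\frac{1}{\bar\varrho^{\,n-1}}\int_{B_{x,\bar\varrho}}|\nabla u|^2\le \frac{1}{\bar\varrho^{\,n-1}}\int_{B_1}|\nabla u|^2=\bar\varrho^{-(n-1)}D_u(1)=:M.
\]
Introduce the scales \(\varrho_k:=\bar\varrho\,16^{-k}\) and abbreviate \(d_k:=D_u(x,\varrho_k)\). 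Throughout I use the trivial monotonicity \(\int_{B_{x,\varrho'}}|\nabla u|^2\le\int_{B_{x,\varrho}}|\nabla u|^2\) for \(\varrho'\le\varrho\), which gives \(D_u(x,\varrho')\le(\varrho/\varrho')^{n-1}D_u(x,\varrho)\); in particular \(d_{k+1}\le 16^{n-1}d_k\).

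First I run the \emph{decay phase}. By Lemma~\ref{alt}, as long as \(d_k>C_0\) the alternative \eqref{bound} fails, so \eqref{decade} must hold and \(d_{k+1}\le\frac14 d_k\). Hence \(d_k\le 4^{-k}M\) up to the first index \(k^*\) with \(d_{k^*}\le C_0\). Taking \(K:=\max\{0,\lceil\log_4(M/C_0)\rceil\}\) we have \(4^{-K}M\le C_0\), so \(k^*\le K\): were \(d_k>C_0\) for all \(k\le K\), then \(d_K\le 4^{-K}M\le C_0\), a contradiction. This fixes the radius \(\varrho_1:=\varrho_K=\bar\varrho\,16^{-K}\), which through \(\bar\varrho,M\) and \(C_0\) depends only on \(n,\beta/\alpha,r_0,\Lambda\) and \(D_u(1)\).

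The crucial point is the \emph{stabilization}: once the energy drops below \(C_0\), it can no longer cross a fixed barrier, even though the geometric decay is then lost. I claim \(d_k\le 16^{n-1}C_0\) for all \(k\ge k^*\), by induction on \(k\). If \(d_k\le C_0\), then \(d_{k+1}\le 16^{n-1}d_k\le 16^{n-1}C_0\). If instead \(C_0<d_k\le 16^{n-1}C_0\), then \eqref{bound} fails, so \eqref{decade} yields \(d_{k+1}\le\frac14 d_k\le 16^{n-1}C_0\). In either case the bound is preserved, which proves the claim. This balancing act—growth per step bounded by the scaling factor \(16^{n-1}\), and decay forced whenever the energy exceeds \(C_0\)—is the heart of the matter and the step I expect to be the main obstacle to making the constants close.

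Finally I pass from the dyadic scales to all scales. Given \(\varrho\le\varrho_1=\varrho_K\), let \(k\ge K\) be the unique index with \(\varrho\in(\varrho_{k+1},\varrho_k]\); since then \(k\ge k^*\), the monotonicity estimate together with the claim gives
\[
D_u(x,\varrho)\le\Big(\frac{\varrho_k}{\varrho}\Big)^{n-1}d_k\le 16^{n-1}d_k\le 16^{2(n-1)}C_0=:C_1.
\]
Because \(C_0=C_0(n,\beta/\alpha)\), the constant \(C_1\) depends only on \(n\) and \(\beta/\alpha\); and since all bounds above are uniform in \(x\in B_{1/2}\), this establishes \eqref{universalbound}.
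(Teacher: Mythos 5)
Your proof is correct. It relies on exactly the same two ingredients as the paper's proof --- the dichotomy of Lemma~\ref{alt} and the elementary scaling bound \(D_u(x,r)\le(\varrho/r)^{n-1}D_u(x,\varrho)\), iterated along the scales \(16^{-k}\) --- but you run the iteration in the opposite direction. You go top-down: geometric decay forced by \eqref{decade} until the energy first drops below \(C_0\), followed by the stabilization step showing the energy can never again exceed \(16^{n-1}C_0\). The paper goes bottom-up: for a.e.\ \(x\) it defines \(\varrho(x)\) as the largest radius at which \(D_u(x,\cdot)\le C_1\) and shows inductively that the energy must grow like \(4^k\) at the scales \(16^k\varrho(x)\), which is incompatible with the global bound \(D_u(1)\) unless \(\varrho(x)\) is bounded below by a quantitative \(\varrho_1\). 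The two arguments are essentially dual; yours has the small advantage of applying directly at every \(x\in B_{1/2}\), so it dispenses with the paper's Lebesgue-point and a.e.-to-everywhere reduction. One cosmetic point: as written your \(K=\max\{0,\lceil\log_4(M/C_0)\rceil\}\), hence \(\varrho_1\), depends on \(C_0=C_0(n,\beta/\alpha)\), whereas the statement lists \(\varrho_1=\varrho_1(n,r_0,\Lambda,D_u(1))\); since one may always take \(C_0\ge 1\) (enlarging \(C_0\) only strengthens Lemma~\ref{alt} and only decreases \(K\)), replacing \(K\) by \(\max\{0,\lceil\log_4 M\rceil\}\) removes this dependence and matches the claimed list of parameters exactly.
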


\begin{proof}
By continuity it is enough to prove \eqref{universalbound} at almost every point in \(B_{1/2}\).  Let \(C_1\gg1\) to be fixed, and for every  Lebesgue point \(x\in B_{1/2}\)  of \(|\nabla u|^2\) we define
\[
\varrho(x):=\sup\big\{ \varrho\in (0,1/2): D_u(x,\varrho)\le C_1\big\}.
\] 
Since \(x\) is a Lebesgue point for \(|\nabla u|^2\),
\[
\lim_{\varrho \to 0} D_u(x,\varrho)=\lim_{\varrho\to 0} \varrho \mean{B_{x,\varrho}} |\nabla u|^2=0.
\]
Hence \(\varrho(x)>0\) and \eqref{universalbound} will follow if we can show that \(\varrho(x)\ge \varrho_1\) for some \(\varrho_1=\varrho_1(n, r_0,\Lambda, D_u(1))>0\).  We  claim that if \(C_1\) is  sufficiently big, depending only on $n$ and \(\beta/\alpha\) , then for every \(x\in B_{1/2}\) and \(k\in \N\) such that 
\begin{equation}\label{treno1}
16^k\varrho(x)\le\min\{1/2,r_0,1/\Lambda\}
\end{equation}
we have
\begin{equation}\label{cresce}
D_u(x, 16^k \varrho(x))\ge C_1 4^k.
\end{equation}
We will prove \eqref{cresce} by induction on \(k\), the case \(k=0\) being trivial. 
To prove the induction step we first notice as a preliminary observation that
\begin{equation}
\label{eq:Du scale}
D_u(x,r) = \frac{1}{r^{n-1}} \int_{B_{x,r}}|\nabla u|^2 \leq  \frac{1}{r^{n-1}} \int_{B_{x,\varrho}}|\nabla u|^2
= \frac{\varrho^{n-1}}{r^{n-1}} D_u(x,\varrho)\qquad \forall\, r\in (0, \varrho).
\end{equation}
We now assume that \eqref{treno1} holds for \(k\) and that \eqref{cresce} holds for \(k-1\).
Then, if \(C_1\ge  C_0 16^{n-1}\) where \(C_0\) is the constant in Lemma \ref{alt},
applying \eqref{eq:Du scale} with $r=16^{k-1} \varrho(x)$ and $\varrho=16^{k} \varrho(x)$ we deduce that
\[
D_u(x, 16^k \varrho(x))\ge 16 ^{-(n-1)} D_u(x, 16^{k-1} \varrho(x))\ge 16^{-(n-1) }C_1 4^k \ge 16^{-(n-1) }C_1\ge C_0.
\]
Hence, we can apply Lemma \ref{alt} and the inductive step to infer that
\[
D_u(x, 16^k \varrho(x))\ge 4 D_u(x, 16^{k-1} \varrho(x))\ge C_1 4^k\,, 
\]
proving \eqref{cresce} for \(k\). Let now \(k_0=k_0(x)\) be the first \(k\) such that \eqref{treno1} fails for \(k=k_0+1\).
Then, 
since \(16^{k_0}\varrho(x)\ge\min\{1/2,r_0,1/\Lambda\}/16\),
according to \eqref{cresce} we get
\begin{equation}\label{treno2}
C_1 4^{k_0}\le D_u(x, 16^{k_0} \varrho(x))\le C(n, r_0,\Lambda) \,D_u(1)\,.
\end{equation}
This proves that \(k_0(x)\le k(n,r_0,\Lambda, D_u(1))\) therefore, by \eqref{treno1}, \(\varrho(x)\ge \varrho_1(n, r_0,\Lambda, D_u(1))>0\), as desired.
\end{proof}

\subsection{Lower density estimates} Section \ref{subsection:density} provides an upper-bound for \(P(E, B_{x,\varrho})\) in terms of \(\varrho^{n-1}\).
We now focus on the lower bound. In order to show that the constant \(\sigma\) appearing in the conclusion of Theorem \ref{thm:main2} depends only on $n$ and \(\beta/\alpha\) we need the lower bound on the density to depend only on this ratio. On the other hand the scale at which the density estimates become valid shall depends also on \(r_0\), \(\Lambda\) and  \(D_u(1)\).

\begin{lemma}\label{lowerdensity} 
Let \((u,E)\) be a    \((\Lambda, r_0)\)-minimizer of \(\F^{1,\beta/\alpha}_1\) in \(B_1\).
There exist a constant \(C_2=C_2(n,\beta/\alpha)>0\) and a radius \(\varrho_2=\varrho_2(n,r_0, \beta/\alpha, \Lambda, D_u(1))>0\) such that
\begin{equation}\label{eq:lowerdensity}
P(E,B_{x,\varrho})\ge  \varrho^{n-1}/C_2
\qquad \forall\, x\in B_{1/2}, \quad \varrho \le \varrho_2.
\end{equation}
\end{lemma}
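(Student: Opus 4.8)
\emph{Reduction and strategy.} Since $P(E,B_{x,\varrho})$ vanishes for small $\varrho$ at points of the interior of $E$ or of its complement, the estimate is meaningful only at boundary points, so I take $x\in\partial E\cap \overline B_{1/2}$ and fix the representative of $E$ with $\partial E=\mathrm{supp}\,|D\1_E|$; then $m(\varrho):=|E\cap B_{x,\varrho}|$ and $\bar m(\varrho):=|B_{x,\varrho}\setminus E|$ are both strictly positive for every $\varrho>0$. The plan is to prove a \emph{two-sided} volume density estimate, $m(\varrho)\ge c_0\varrho^n$ and $\bar m(\varrho)\ge c_0'\varrho^n$, and then to conclude \eqref{eq:lowerdensity} from the relative isoperimetric inequality $P(E,B_{x,\varrho})\ge c(n)\min\{m(\varrho),\bar m(\varrho)\}^{(n-1)/n}$. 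Throughout I restrict to $\varrho\le\varrho_2\le\min\{r_0,\varrho_1,c(n)/\Lambda\}$, with $\varrho_1$ the radius of Lemma \ref{energybound}, so that $\int_{B_{x,\varrho}}|\nabla u|^2\le C_1\varrho^{n-1}$ and the $\Lambda$-terms below are absorbable.

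\emph{Inner volume bound.} Testing minimality against $(u,E\setminus B_{x,\varrho})$ (admissible once $\varrho<r_0$), the Dirichlet energy changes by $-\frac{\beta-\alpha}{\alpha}\int_{E\cap B_{x,\varrho}}|\nabla u|^2\le0$, so it has the favourable sign and may be dropped. Using the slicing identity $P(E,\Omega)-P(E\setminus B_{x,\varrho},\Omega)=P(E,B_{x,\varrho})-m'(\varrho)$, valid for a.e. $\varrho$ with $m'(\varrho)=\H^{n-1}(E^{(1)}\cap\partial B_{x,\varrho})$, one gets $P(E,B_{x,\varrho})\le m'(\varrho)+\Lambda m(\varrho)$. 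Combined with the Euclidean isoperimetric inequality $P(E,B_{x,\varrho})+m'(\varrho)=P(E\cap B_{x,\varrho})\ge n\omega_n^{1/n}m(\varrho)^{(n-1)/n}$ and the absorption of $\Lambda m(\varrho)$ (possible since $\Lambda\varrho$ is small), this yields $m'(\varrho)\ge c(n)\,m(\varrho)^{(n-1)/n}$, i.e. $(m^{1/n})'\ge c(n)$; integrating from $0$ with $m(0^+)=0$ gives $m(\varrho)\ge c_0(n)\varrho^n$. Note that this step uses only $r_0$ and $\Lambda$, not the energy bound.

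\emph{Outer volume bound.} By the relative isoperimetric inequality, \eqref{eq:lowerdensity} is immediate whenever $\bar m(\varrho)\ge m(\varrho)$, since then $\min=m\ge c_0\varrho^n$. It remains to treat the regime in which $E$ fills more than half of $B_{x,\varrho}$, where I must show $\bar m(\varrho)\ge c_0'\varrho^n$. The natural competitor is $(u,E\cup B_{x,\varrho})$; the same slicing and minimality give
\[
P(E,B_{x,\varrho})\le \bar m'(\varrho)+\frac{\beta-\alpha}{\alpha}\int_{B_{x,\varrho}\setminus E}|\nabla u|^2+\Lambda\bar m(\varrho),
\]
and together with the isoperimetric inequality for $B_{x,\varrho}\setminus E$ this produces
\[
c(n)\,\bar m(\varrho)^{(n-1)/n}\le 2\bar m'(\varrho)+\frac{\beta-\alpha}{\alpha}\int_{B_{x,\varrho}\setminus E}|\nabla u|^2+\Lambda\bar m(\varrho).
\]

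\emph{The main obstacle.} The difficulty is exactly the Dirichlet term: enlarging $E$ raises the coefficient from $\alpha$ to $\beta$, so it enters with the \emph{unfavourable} sign, and the crude bound from Lemma \ref{energybound} only gives $\int_{B_{x,\varrho}\setminus E}|\nabla u|^2\le C_1\varrho^{n-1}$, which is of the same order as $\bar m^{(n-1)/n}$ and cannot be absorbed when $\bar m\ll\varrho^n$. To beat it I would exploit that, precisely in the regime $\bar m\ll\varrho^n$, this energy is lower order: by Meyers' higher integrability for \eqref{eq:equ} one has $\nabla u\in L^p$ with $p=p(n,\beta/\alpha)>2$ and a reverse H\"older inequality whose right-hand side is controlled, via Lemma \ref{energybound}, by $C_1$, so H\"older's inequality on the small set $B_{x,\varrho}\setminus E$ bounds the term by $C\,\varrho^{\,2n/p-1}\bar m^{\,1-2/p}$, which is negligible against $\bar m^{(n-1)/n}$ once $p$ is large enough; the resulting differential inequality then closes as in the inner bound. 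When the contrast $\beta/\alpha$ is too large for $p$ to suffice, I would instead argue by compactness in the spirit of the paper: a sequence of rescaled minimizers with $\bar m(\varrho_k)/\varrho_k^n\to0$ converges to a minimizer filling $B_1$, whence the excess at the origin becomes infinitesimal; since the energy stays bounded by Lemma \ref{energybound}, the smallness hypothesis of the excess-regularity result of Theorem \ref{eps0} is met at small scales, forcing $\partial E_k$ to be a $C^{1,\gamma}$ graph through $x_k$ and contradicting the vanishing of its perimeter density. Either way one obtains $\bar m(\varrho)\ge c_0'(n,\beta/\alpha)\varrho^n$; feeding both volume bounds into the relative isoperimetric inequality gives \eqref{eq:lowerdensity} with $C_2=C_2(n,\beta/\alpha)$, while $\varrho_2$ inherits the dependence on $r_0,\beta/\alpha,\Lambda,D_u(1)$ through $\varrho_1$ and the absorption of the $\Lambda$-terms.
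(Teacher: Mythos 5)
Your reduction to a two-sided volume density estimate and your inner bound (removing $E\cap B_{x,\varrho}$, where the Dirichlet term has the favourable sign) are fine. The gap is exactly where you locate it, in the outer volume bound, and neither of the two devices you propose closes it. For the Meyers route: writing $\bar m(\varrho)=t\varrho^n$, your term $C\varrho^{2n/p-1}\bar m^{1-2/p}=Ct^{1-2/p}\varrho^{n-1}$ is negligible against $\bar m^{(n-1)/n}=t^{(n-1)/n}\varrho^{n-1}$ as $t\to0$ only if $1-2/p>1-1/n$, i.e. $p>2n$; Meyers' theorem yields only $p=2+\epsilon$ with $\epsilon=\epsilon(n,\beta/\alpha)$ small (and degenerating as the contrast grows), so this threshold is out of reach for every $n\ge 2$ except when $\beta/\alpha$ is essentially $1$. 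For the compactness route: if $\bar m(\varrho_k)/\varrho_k^n\to 0$, the rescaled sets $F_k$ converge in $L^1$ to the full ball, so $P(F_\infty,B_1)=0$ and the limit has empty boundary; nothing prevents $\partial F_k$ from simply disappearing in the limit, and there is no mechanism to verify the flatness hypothesis $\inf_\nu \mean{\partial F_k\cap B_r}|\nu_{F_k}-\nu|^2\le\delta_1$ of Theorem \ref{eps0} --- that quantity is an average over $\partial F_k\cap B_r$ and need not become small just because $P(F_k,B_r)\to 0$. Worse, the tools one would use to extract a contradiction (Kuratowski convergence of the boundaries, as in Lemma \ref{conv}) are themselves consequences of the density estimates \eqref{density}, which rest on the very lemma you are proving, so the argument is circular as stated.

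The missing ingredient is a quantitative decay of the Dirichlet energy, of the type $\int_{B_{x,r}}|\nabla u|^2\le C r^{n-1+\gamma}$ for some $\gamma>0$, valid precisely in the regime where one of the two phases has small volume density; this makes the unfavourable term $\frac{\beta-\alpha}{\alpha}\int_{B_{x,\varrho}\setminus E}|\nabla u|^2$ genuinely lower order with respect to $\varrho^{n-1}$ and lets the differential inequality for $\bar m$ close. This is the content of Lemmas 3.1 and 3.3 of \cite{F}, and the paper does not reprove it: its proof rescales $(u,E)$ at a scale $\varrho\le\min\{\varrho_1,r_0,\delta/\Lambda\}$ into a $(\delta,1)$-minimizer with $D_v(1)\le C_1(n,\beta/\alpha)$ (via Lemma \ref{energybound}) and then cites \cite{F}; the only real content of the lemma is the bookkeeping showing that the resulting constant depends only on $n$ and $\beta/\alpha$. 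To repair your argument you should either import the energy-decay estimate from \cite{F} as the paper does, or establish it yourself (e.g.\ by iterating the harmonic-comparison inequality \eqref{eq:harcomp} together with \eqref{eq:upperdensity}) before attempting the outer volume bound.
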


\begin{proof} Let \(\varrho_1=\varrho_1(r_0,\Lambda, D_u(1))\) be the radius appearing in Lemma \ref{energybound} so that 
\[
D_u(x,\varrho)\le C_1
\]
for every \(x\in B_{1/2}\) and \(\varrho\le \varrho_1\),
with \(C_1=C_1(n,\beta/\alpha)\). If we consider \(v=u^{x,\varrho}\) and \(F=E^{x,\varrho}\) for \(x\in B_1\) and  \(\varrho\le \min\{\varrho_1, r_0, \delta/\Lambda)\) with \(\delta\ll 1\) to be fixed,  we see that \((v,F)\) is a \((\delta,1)\)-minimizer of \(\F_1^{1,\beta/\alpha}\) in \(B_1\). In addition
\begin{equation}\label{rubare2}
D_v(1)\le C_1(n,\beta/\alpha).
\end{equation}
Hence, if \(\delta\) is sufficiently small (depending only on \(C_1\)) we can argue as in \cite[Section 3]{F} to obtain the lower density estimates 
\begin{equation}\label{rubare}
P(F, B_{\varrho})\ge \varrho^{n-1}/\hat C \qquad \forall \, \varrho\le 1/2\,,
\end{equation}
see Lemmas 3.1 and 3.3 and the subsequent corollaries in \cite{F}. In particular, as it is clear from the proofs in \cite{F}, the  constant \(\hat C \) in \eqref{rubare} depends only on \(D_v(1)\), which in turn depends only on $n$ and \(\beta/\alpha\) (thanks to \eqref{rubare2}). Scaling back to \(E\), \eqref{rubare} implies \eqref{eq:lowerdensity}.
\end{proof}

A standard consequence of the lower density estimates is that \(\mathcal H^{n-1}(\partial E\setminus \partial^{*} E)=0\), where \(\partial^* E\) is the reduced boundary of \(E\), see \cite[Theorem 16.14]{M}. In particular, up to enlarge \(C_2\) and reduce \(\varrho_2\), combining Lemma \ref{lowerdensity} and Section \ref{subsection:density} we have that, if  \((u,E)\) be a    \((\Lambda, r_0)\)-minimizer of \(\F^{1,\beta/\alpha}_1\) in \(B_1\),
\begin{equation}\label{density}
\varrho^{n-1}/C_2\le \mathcal H^{n-1}(\partial E\cap B_{x,\varrho})\le C_2\varrho^{n-1} \qquad \forall\, x\in B_{3/4}, \quad \varrho \le \varrho_2,
\end{equation}
for some \(C_2=C_2(n,\beta/\alpha)\) and \(\varrho_2=\varrho_2(n,r_0, \beta/\alpha, \Lambda, D_u(1))\).

\subsection{The \(\eps\)-regularity theorem and convergence of minimizers}\label{partial} We recall the following theorem which has been proved in \cite{F} for \(\Lambda=0\), see Section 5 therein. Since, at small scales, volume terms are lower order with respect to surface terms, the proof can be repeated almost verbatim  for \((\Lambda,r_0)\)-minimizers.

\begin{theorem}\label{eps0}Let  \((u,E)\) be a \((\Lambda,1)\)-minimizer of \(\F^{1,\beta/\alpha}_1\) in \(B_{x,\varrho}\).
There exist \(\delta_1=\delta_1(n,\beta/\alpha)>0\) and \(\gamma=\gamma(n,\beta/\alpha)>0\)  such that, if
\begin{equation}\label{delta}
\Lambda\varrho+D_u(x,\varrho)+\inf_{\nu \in S^{n-1}} \mean{\partial E\cap B_{x,\varrho}}|\nu_E-\nu|^2\le \delta_1\,,
\end{equation}
then  \(\partial E\cap B_{\varrho/2}\) is a \(C^{1,\gamma}\) hypersurface. 
\end{theorem}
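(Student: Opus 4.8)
\emph{The statement to be proved is Theorem \ref{eps0}.}

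The plan is to recast $E$ as an almost-minimizer of the perimeter whose deviation from minimality is governed by the Dirichlet energy and the volume penalty, and then to run the classical De Giorgi excess-decay scheme to turn the smallness hypothesis \eqref{delta} into a power decay of the spherical excess. By the horizontal scaling \eqref{dirhor} we may assume $x=0$ and $\varrho=1$. Testing the minimality \eqref{lambda} with the competitor $(u,F)$, for an arbitrary $F$ with $E\Delta F\Subset B_{0,r}$, and using $a_F-a_E=(\beta-\alpha)(\1_F-\1_E)$ together with $\eps=1$, gives the almost-minimality inequality
\begin{equation*}
P(E,B_{0,r})\le P(F,B_{0,r})+(\beta-\alpha)\int_{E\Delta F}|\nabla u|^2+\Lambda|E\Delta F|.
\end{equation*}
Thus $E$ differs from a perimeter minimizer only through the two volume-type terms on the right, and the whole iteration will be driven by showing that these are lower order with respect to the perimeter.

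The term $\Lambda|E\Delta F|\le\Lambda\omega_n r^n$ is harmless, scaling like $r$ times the surface term. The real work is to control $\int_{E\Delta F}|\nabla u|^2$ for the competitors $F$ produced in the excess-decay construction, which differ from $E$ only in a thin region, so that $|E\Delta F|$ is small, of order $\mathbf e(0,r)\,r^n$; here $\mathbf e(0,r):=\inf_{\nu\in S^{n-1}}\frac1{r^{n-1}}\int_{\partial E\cap B_{0,r}}\frac{|\nu_E-\nu|^2}{2}\,d\H^{n-1}$ is the spherical excess, comparable by \eqref{density} to the averaged quantity in \eqref{delta}. I would extract the needed gain from two facts. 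First, since \eqref{eq:equ} says that $u$ solves $\mathrm{div}(a_E\nabla u)=0$ with $\alpha\le a_E\le\beta$, a Meyers/Gehring reverse-H\"older argument yields $\nabla u\in L^{2p}_{\mathrm{loc}}$ for some $p=p(n,\beta/\alpha)>1$; H\"older's inequality then converts the smallness of $|E\Delta F|$ into a factor $(\mathbf e(0,r))^{1-1/p}$, i.e. a genuine power of the excess. Second, in the nearly-flat regime the Dirichlet energy itself decays: comparing $u$ with the solution $w$ of the two-phase transmission problem across the approximating hyperplane $\{x\cdot\nu=0\}$, one has $D_w(0,r)\le C r$ for small $r$ because $w$ is Lipschitz and piecewise smooth, while an energy comparison analogous to \eqref{eq:harcomp}, now relative to the flat interface, bounds $\int_{B_{0,r}}|\nabla u-\nabla w|^2$ by the excess, so that $D_u(0,r)$ inherits an almost-linear decay.

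With these ingredients I would close a \emph{joint} decay for the hybrid quantity $\Psi(r):=\mathbf e(0,r)+D_u(0,r)+\Lambda r$: there are $\tau\in(0,1/2)$ and $\theta=\theta(n,\beta/\alpha)\in(0,1)$ with $\Psi(\tau r)\le\tau^{2\theta}\Psi(r)$ whenever $\Psi(r)\le\delta_1$. The geometric half of this estimate is De Giorgi's harmonic approximation for the almost-minimal boundary, whose error is exactly the volume-type deviation estimated above; the energetic half is the transmission comparison. Iterating yields $\mathbf e(0,r)\le C r^{2\theta}$ for all small $r$. A power decay of the spherical excess gives H\"older continuity of $\nu_E$ on $\partial^* E\cap B_{1/2}$ with exponent $\theta$, and the lower density estimate of Lemma \ref{lowerdensity} (ensuring $\partial E=\overline{\partial^* E}$, with no lower-dimensional part) then upgrades this to the statement that $\partial E\cap B_{1/2}$ is a $C^{1,\gamma}$ hypersurface with $\gamma=\theta=\gamma(n,\beta/\alpha)$; see \cite[Part III]{M} and \cite[Section 5]{F}.

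The main obstacle is precisely the coupling in the third step. Unlike a pure quasi-minimal surface, here the Dirichlet energy does not decay autonomously --- Lemma \ref{alt} only furnishes a dichotomy rather than monotone decay --- so its power decay in the regular regime must be pulled out of the near-flatness of $\partial E$, and the resulting errors must be fed back into the geometric excess-decay so that the combined iteration closes with constants depending only on $n$ and $\beta/\alpha$. Once this balancing is arranged, the volume penalty $\Lambda|E\Delta F|$ presents no further difficulty, being a term of order $r^n$; this is exactly the sense, announced before the statement, in which at small scales volume terms are lower order than surface terms, so that the $\Lambda=0$ argument of \cite{F} transfers almost verbatim.
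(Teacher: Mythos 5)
The paper does not actually prove Theorem \ref{eps0}: it imports it from \cite{F}, where it is established for $\Lambda=0$, and disposes of the extra term with the single observation that $\Lambda|E\Delta F|=O(\varrho^{n})$ is of lower order than the perimeter at small scales --- precisely the remark you make in your last paragraph. Your proposal instead reconstructs the argument that the citation hides, and the reconstruction is faithful to how the result is obtained in \cite{F} and in the related literature \cite{EF,FJ}: the reduction to almost-minimality of the perimeter with deviation $(\beta/\alpha-1)\int_{E\Delta F}|\nabla u|^{2}+\Lambda|E\Delta F|$ (note the coefficient is $\beta/\alpha-1$ rather than $\beta-\alpha$ for the normalized functional $\F^{1,\beta/\alpha}_{1}$, a harmless slip), the Meyers higher integrability of $\nabla u$ to convert $|E\Delta F|\ll r^{n}$ into a superlinear power of the excess, the comparison with the flat two-phase transmission problem to extract decay of $D_{u}$, and the joint iteration on the hybrid quantity $\mathbf{e}+D_{u}+\Lambda r$. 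You also correctly isolate the genuine difficulty, namely that the energy decay is not autonomous and must be coupled to the flatness. What your sketch does not supply --- and what constitutes the bulk of \cite[Section 5]{F} --- is the harmonic-approximation/excess-decay machinery itself: the Lipschitz approximation of $\partial E$, the explicit construction of the comparison sets $F$ together with the verification that $|E\Delta F|$ is indeed controlled by the right power of the excess, and the bookkeeping of exponents needed to close the iteration. As written it is therefore a correct and well-aimed roadmap rather than a complete proof; for the purposes of this paper, however, it is more than is asked, since the intended ``proof'' is simply to invoke \cite{F} and check that the volume penalization is harmless.
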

As shown in \cite[Section 5]{F}, 
Theorem \ref{eps0} implies that \(\mathcal H^{n-1}(\Sigma (E))=0\).   A useful  classical consequence of Theorem \ref{eps0} is the following lemma concerning convergence of minimizers:

\begin{lemma}\label{conv}Let \((u_k,E_k)\) be a sequence of \((\Lambda,1)\)-minimizers of \(\F^{1,\beta/\alpha}_1\) in \(B_1\) such that
\begin{equation}\label{treno}
\sup_{k\in \N} D_{u_k} (1)<\infty.
\end{equation}
Then, up to a subsequence, there exists \((u,E)\) a  \((\Lambda,1)\)-minimizer of \(\F^{1,\beta/\alpha}_1\) in \(B_{1}\) such that 
\[
\|u_k-u\|_{W^{1,2}(B_{3/4})}\to 0,\qquad |(E_k\Delta E)\cap B_{3/4}|\to 0\,.
\]
Moreover \(P(E_k, \cdot)\to P(E, \cdot)\) as Radon measures in \(B_{3/4}\), and  \(\partial E_k\cap \overline B_{3/4}\to \partial E\cap \overline B_{3/4}\) in the Kuratowski sense. Finally, if \(x_0\in  {\rm  Reg} ( E)\cap B_{1/2}\)
and \( \partial E_k\cap B_{3/4}\ni x_k\to x_0\), then  there exists a radius \(\overline \varrho>0\) (depending on \(E\) and \(x_0\)) such that, for \(k\) sufficiently large, \(\partial E_k\cap B_{x_k,\overline \varrho}\subset  {\rm  Reg} (E_k)\).
\end{lemma}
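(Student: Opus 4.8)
The plan is to prove the compactness Lemma \ref{conv} in two stages: first establish the convergence of the minimizers and the measures, then deal with the more delicate propagation-of-regularity statement at the end.

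\textbf{Step 1: Compactness and identification of the limit.} First I would use the uniform bound \eqref{treno} together with the energy bound from Lemma \ref{energybound} (or directly the upper density estimate \eqref{eq:upperdensity}) to obtain uniform bounds for \(u_k\) in \(W^{1,2}(B_{3/4})\) and for the perimeters \(P(E_k, B_{3/4})\). Standard weak compactness in \(W^{1,2}\) and \(BV\)-compactness for sets of finite perimeter then yield, up to a subsequence, a limit \(u\in W^{1,2}(B_{3/4})\) with \(u_k\rightharpoonup u\) weakly and \(\1_{E_k}\to \1_E\) in \(L^1\) for some set \(E\) of locally finite perimeter. The main work here is to verify that \((u,E)\) is itself a \((\Lambda,1)\)-minimizer and that the convergence upgrades to strong \(W^{1,2}\) convergence and convergence of perimeters as Radon measures. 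To see minimality, given any admissible competitor \((v,F)\) for \((u,E)\), I would modify it near the boundary of the competition ball by interpolating with \(u_k\) on a thin annulus to produce admissible competitors \((v_k, F_k)\) for \((u_k, E_k)\); passing to the limit in the minimality inequality \eqref{lambda}, using lower semicontinuity of the perimeter and of the Dirichlet-type energy, then transfers minimality to the limit. Strong \(W^{1,2}\) convergence and the convergence \(P(E_k,\cdot)\to P(E,\cdot)\) as Radon measures follow from comparing the energies \(\F^{1,\beta/\alpha}_1(u_k,E_k)\) with those of the limit and using the minimality in both directions to rule out energy loss. The Kuratowski convergence \(\partial E_k\cap \overline B_{3/4}\to \partial E\cap\overline B_{3/4}\) is then a consequence of the uniform lower and upper density estimates \eqref{density}: the upper bound prevents mass from escaping (so limit points of \(\partial E_k\) lie in \(\partial E\)), while the lower bound forces points of \(\partial E\) to be approximated by points of \(\partial E_k\).

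\textbf{Step 2: Propagation of regularity.} The final assertion is where the \(\eps\)-regularity theorem enters, and I expect this to be the main obstacle. Let \(x_0\in {\rm Reg}(E)\cap B_{1/2}\) and let \(x_k\to x_0\) with \(x_k\in\partial E_k\). Since \(x_0\) is a regular point, \(\partial E\) is a \(C^{1,\gamma}\) graph near \(x_0\); in particular, there is a scale \(\overline\varrho\) at which the flatness of \(\partial E\) in \(B_{x_0, 2\overline\varrho}\) is as small as we like, i.e. \(\inf_\nu \fint_{\partial E\cap B_{x_0,2\overline\varrho}}|\nu_E-\nu|^2\) is small, and at which \(D_u(x_0, 2\overline\varrho)\) is small (the latter because \(u\) is more regular across a \(C^{1,\gamma}\) interface, or simply by choosing \(\overline\varrho\) small). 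The idea is to verify the hypothesis \eqref{delta} of Theorem \ref{eps0} for \((u_k, E_k)\) in a ball \(B_{x_k,\overline\varrho}\) once \(k\) is large. The Dirichlet term passes to the limit by the strong \(W^{1,2}\) convergence from Step 1. The delicate point is the flatness term: I would use the convergence of \(P(E_k,\cdot)\to P(E,\cdot)\) as Radon measures together with the \(L^1\) convergence of \(\1_{E_k}\) to show that the measures \(\nu_{E_k}\,\H^{n-1}\lfloor\partial E_k\) converge weakly-\(\ast\) to \(\nu_E\,\H^{n-1}\lfloor\partial E\); combined with convergence of the total masses \(P(E_k,B_{x_k,\overline\varrho})\to P(E,B_{x_0,\overline\varrho})\) (which uses that \(\overline\varrho\) can be chosen so that \(P(E,\partial B_{x_0,\overline\varrho})=0\)), this yields convergence of the normalized flatness integrals. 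Hence for \(k\) large the left-hand side of \eqref{delta} is below \(\delta_1\), so Theorem \ref{eps0} applies and gives that \(\partial E_k\cap B_{x_k,\overline\varrho/2}\) is a \(C^{1,\gamma}\) hypersurface, i.e. contained in \({\rm Reg}(E_k)\), as claimed.

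The hardest part is genuinely Step 2, and specifically ensuring that the flatness excess of \(E_k\) is controlled by that of \(E\) in the limit. One must be careful that the weak convergence of the Gauss–Green measures together with convergence of masses (not merely lower semicontinuity) gives the strong convergence of the oscillation-of-normal integral; this is precisely where the Radon-measure convergence \(P(E_k,\cdot)\to P(E,\cdot)\) established in Step 1 is indispensable, since lower semicontinuity of the perimeter alone would not suffice to control the averaged flatness from above.
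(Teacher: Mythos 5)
Your proposal follows essentially the same route as the paper: Step 1 is treated there as classical (the paper cites the proof of Theorem 4.1 in \cite{AB} for the strong $W^{1,2}$ convergence and identification of the limit minimizer, and the density estimates \eqref{density} for the Kuratowski convergence), and Step 2 verifies hypothesis \eqref{delta} of Theorem \ref{eps0} in balls around $x_k$ for large $k$ exactly as you do, your weak-$\ast$ convergence of the Gauss--Green measures plus mass convergence being the correct way to make precise what the paper dismisses as an immediate consequence of the strong convergence of $(u_k,E_k)$. One small caveat: the smallness of $D_u(x_0,2\overline\varrho)$ should come, as in the paper, from the Lipschitz regularity of $u$ near the regular point $x_0$ (elliptic regularity, \cite[Theorem 5]{F}), and not ``simply by choosing $\overline\varrho$ small'', since the normalization $\varrho^{-(n-1)}$ in \eqref{eq:dir} does not force $D_u(x_0,\varrho)\to 0$ for a general $W^{1,2}$ function at a prescribed point of the interface.
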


\begin{proof}The first  part of the statement concerning the strong $W^{1,2}$-convergence of \(u_k\) is classical, see for instance the proof of Theorem 4.1 in \cite{AB} (note that the sequence \((u_k, E_k)\) is precompact according to \eqref{treno} and \eqref{eq:upperdensity}). Also, Kuratowski convergence of \(\partial E_k\) to \(\partial E\) is an easy consequence of the  density estimates \eqref{density}.

Concerning the last part of the statement we start noticing that, by elliptic regularity, if \(x_0\in {\rm Reg} (E)\cap B_{1/2}\) then \(u\) is Lipschitz in a neighborhood  of \(x_0\) \cite[Theorem 5]{F}. In particular, taking into account the \(C^1\) regularity of \(\partial E\) at \(x_0\), there exits a radius \(\overline \varrho=\overline\varrho(x_0,\Lambda,\beta/\alpha)\) such that
\[
2\Lambda \overline \varrho+D_u(x_0,4\overline \varrho)+\inf_{\nu \in \boldsymbol S^{n-1}} \mean{\partial E\cap 2B_{x_0,4\overline \varrho}}|\nu_E-\nu|^2\le \delta_1/2\,,
\]
where \(\delta_1\) is the constant appearing in \eqref{delta}. By the strong convergence of the sequence \((u_k, E_k)\) and the convergence of \(x_k\) to \(x_0\) we immediately see that, for \(k\) large enough,
\[
2\Lambda\overline \varrho+D_u(x_k,2\overline \varrho)+\inf_{\nu \in \boldsymbol S^{n-1}} \mean{\partial E\cap 2B_{x_k,2 \overline\varrho }}|\nu_E-\nu|^2\le \delta_1\,.
\]
Theorem \ref{eps0} now implies that  \(\partial E_k\cap B_{x_k,\overline \varrho}\subset  {\rm  Reg} (E_k)\), as desired.
\end{proof}

\section{Proof of Theorems \ref{thm:main1} and \ref{thm:main2}}\label{3}
We now provide the proofs of Theorems  \ref{thm:main1} and \ref{thm:main2}.
\begin{proof}[Proof of Theorem \ref{thm:main2}]
Let \((u,E)\) be a \((\Lambda,r_0)\)-minimizer of \(\F_{\eps}^{\alpha,\beta}\) in \(B_1\). We aim to prove that \(\Sigma (E)\cap \overline B_{1/2}\) is \((\sigma,\hat \varrho)\)-porous in \(\partial E\), where \(\sigma=\sigma(n,\beta/\alpha)\) and \(\hat\varrho=\hat \varrho\big(n,r_0,\,\Lambda/\eps, \,\alpha D_u(1)/\eps\big)\). To this end let us set
\begin{equation}\label{defv}
v:=\sqrt{\alpha/\eps} \,\,u
\end{equation}
and note that, according to Section \ref{subsection:scaling}, \((v,E)\) is a \((\Lambda/\eps, r_0)\) minimizer of \(\F_1^{1,\beta/\alpha}\). Moreover
\begin{equation}\label{francoforte}
D_{v}(1)=\alpha D_u(1)\big/\eps .
\end{equation}
We now define
\begin{equation}\label{hatrho}
\hat{\varrho}:=\min\big\{1/2,r_0,\eps/\Lambda, \varrho_1(n,r_0,\Lambda, D_v(1)))\},
\end{equation}
where \(\varrho_1\) the  radius appearing in Lemma \ref{energybound}. Note that, according to \eqref{francoforte}, 
\[
\hat \varrho=\hat \varrho\big(n,r_0,\,\Lambda/\eps, \,\alpha D_u(1)/\eps \big)
\]
and that, by Lemma \ref{energybound},
\begin{equation}\label{jp}
D_{v}(x,\varrho)\le C_1\qquad \forall\, x\in B_{1/2} \,,\quad  \varrho\le \hat\varrho.
\end{equation}
Finally, for \(x\in  \partial E\cap B_{1/2}\) and \(\varrho\le \hat \varrho\) we set
\[
p(x,\varrho):=\frac{\sup\big\{r\in (0,\varrho): \textrm {there exists \(y\) s.t. } B_{y,r}\cap \partial E\subset \big(\partial E \setminus (\Sigma (E)\cap \overline B_{1/2})\big) \cap B_{x,\varrho}\big\}}{\varrho}\,,
\]
so that the conclusion of Theorem \ref{thm:main2} is equivalent to \(p(x,\varrho)\ge \sigma\) for some \(\sigma=\sigma(n,\beta/\alpha)\) (note that  \(\varrho\le \hat \varrho \le 1/2\) and \(x\in B_{1/2}\) imply \(B_{x,\varrho}\subset B_1\)). 

Let us argue by contradiction and  assume that  there exists a sequence  \((u_k,E_k)\) of \((\Lambda_k, r_{0,k})\)-minimizers  of \(\F_{\eps_k}^{\alpha_k, \beta_k}\) in \(B_1\), with \(\beta_k/\alpha_k=\beta/\alpha\), for which there exist a point \(x_k\in \partial E_k \cap B_{1/2}\) and a radius \(\varrho_k\le \hat \varrho_k\) (\(\hat \varrho_k\) as in \eqref{hatrho}) such  that
\[
p(x_k,\varrho_k)\to 0\qquad\textrm{as \(k\to \infty\)}.
\]
If we define \(v_k\) as in \eqref{defv} and set \(w_k:=v_k^{x_k, \varrho_k}\) and \(F_k:=E_k^{x_k,\varrho_k}\), we get a sequence  of \((1,1)\) minimizers of \(\F^{1,\beta/\alpha}\) in \(B_1\), with \(0\in \partial E\) and for which
\begin{equation}\label{pk}
p_k:=\sup\big\{r: \textrm {there exists \(y\) s.t. } B_{y,r}\cap \partial F_k\subset \big(\partial F_k \setminus \Sigma (F_k)\big)\cap B_1 \big\}\le p(x_k,\varrho_k)\to 0.
\end{equation}
According to \eqref{jp} and Section \ref{subsection:scaling}
\[
D_{w_k}(1)=D_{v_k}(x_k,\varrho_k)\le C_1\,,
\]
hence, thanks to Theorem \ref{eps0}, there exists a \((1,1)\) minimizer \((w_\infty, E_\infty)\) of \(\F_1^{1,\beta/\alpha}\) in \(B_1\)  such that 
\[
\|w_k-w_\infty\|_{W^{1,2}(B_{3/4})}\to 0\qquad |(F_k\Delta F_\infty) \cap B_{3/4}|\to 0\,,
\]
and \(0\in \partial F_\infty\). By Lemma \ref{lowerdensity}  \(\H^{n-1}(\partial F_\infty\cap B_{1/2})>0\)  and \(\H^{n-1}(\Sigma(\F_\infty))=0\),  hence there exists a regular point \(x_0 \in \partial F_\infty\cap B_{1/2}\). By  the Kuratowski convergence of \(\partial F_k\cap \overline B_{1/2}\) to \(\partial F_\infty\cap \overline B_{1/2}\), we can find  a sequence of points \(y_k\in  \partial F_k\cap \overline B_{1/2}\) such that \(y_k\to x_0\). According to  Theorem \ref{eps0} there exists a radius \(\overline \varrho>0\) such that, for \(k\) large, \(\partial F_k\cap B_{y_k,\overline \varrho}\subset  {\rm  Reg} (F_k)\). Since this last fact is in contradiction with \eqref{pk}, this concludes the proof of Theorem \ref{thm:main2}.
\end{proof}

Before proving Theorem \ref{thm:main1}, let us recall the following lemma concerning porous sets. Its proof can be obtained by following the same argument given in the Introduction of \cite{S}  or in  \cite[Lemma 5.10]{DS}. The key point is that a Alfhors regular set  (i.e., a set satisfying  \eqref{stime} below)  admits a ``dyadic cubes''  decomposition, as shown for instance in   \cite[Appendix]{D}. 

\begin{lemma}\label{poro} Let \(\Sigma\subset B_{1/2}\) be a  closed set and let \(K\subset B_1\) be a relatively closed set such that \(\Sigma\subset K\). Let  us assume that \(\Sigma\) is \((\sigma,\hat \varrho)\)-porous in \(K\) and that there exits a constant \(\hat C\) such that
\begin{equation}\label{stime}
\varrho^{n-1}/\hat C\le \H^{n-1}(K\cap B_{x,\varrho})\le \hat C\varrho^{n-1}\qquad \forall \, x\in B_{3/4}\quad \varrho\le \hat \varrho.
\end{equation}
Then there exists \(\kappa=\kappa(n,\sigma,\hat C)>0\) such that \(\dim_H \Sigma\le n-1-\kappa\).
\end{lemma}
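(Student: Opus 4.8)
The plan is to exploit the two-sided Ahlfors regularity in \eqref{stime} to transfer the metric porosity of $\Sigma$ into a covering bound on its Hausdorff measure at successively finer scales, and then deduce a genuine dimension gap. The key tool is the dyadic-cube decomposition of an Ahlfors-regular set $K$ (as in \cite[Appendix]{D}): there is a family of ``cubes'' $\{Q\}$ organized into generations $\mathcal{G}_j$, where each $Q \in \mathcal{G}_j$ has diameter comparable to $2^{-j}$, contains a ball $B_{x_Q, c 2^{-j}} \cap K$ and is contained in a ball $B_{x_Q, C 2^{-j}} \cap K$, satisfies $\H^{n-1}(Q) \approx 2^{-j(n-1)}$, and each cube of generation $j$ is partitioned into a bounded number of children in generation $j+1$. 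The constants $c, C$ and the number of children depend only on $n$ and $\hat C$.

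The main point is that porosity forces a definite proportion of each cube to be removed from $\Sigma$ at every generation. First I would restrict attention to generations finer than $\hat\varrho$, and fix $Q \in \mathcal{G}_j$ with center $x_Q$. Applying the $(\sigma,\hat\varrho)$-porosity at the point $x_Q$ and at scale $\varrho \approx 2^{-j}$ (the diameter of $Q$), I obtain a point $y$ and a radius $r \in (\sigma\varrho, \varrho)$ with $B_{y,r} \cap K \subset K \setminus \Sigma$. Since $r \ge \sigma\varrho$, the ball $B_{y,r}$ has $\H^{n-1}(B_{y,r}\cap K) \ge (\sigma\varrho)^{n-1}/\hat C$ by the lower bound in \eqref{stime}, and this region meets $\Sigma$ nowhere. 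Because $\H^{n-1}(Q) \le \hat C \varrho^{n-1}$ from the upper bound, the portion of $Q$ that survives intersection with $\Sigma$ misses at least a fixed fraction $\theta = \theta(n,\sigma,\hat C) \in (0,1)$ of $Q$'s measure; equivalently, $\Sigma$ is contained in at most $(1-\theta)$ of the children of $Q$ (after choosing the generation gap large enough that the emptied ball swallows whole sub-cubes). Iterating over $m$ generations shows that $\Sigma$ is covered by the descendants of the root cube that survive every stage, and their number is at most $N^{m}(1-\theta)^{m}$ where $N$ bounds the number of children; the emptied-ball argument guarantees we may take the per-generation survival factor strictly below the branching number $N$.

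To convert this into a dimension bound, I would estimate the $s$-dimensional Hausdorff premeasure of $\Sigma$ using the surviving cubes of generation $j+m$ as a cover: each has diameter $\approx 2^{-(j+m)}$, and their number is at most $C' (N(1-\theta))^m$. Choosing the exponent $s = n - 1 - \kappa$ with $\kappa > 0$ small, the sum $\sum (\text{diam})^s$ over the cover behaves like $\big(N(1-\theta)\big)^m 2^{-(j+m)s}$ compared to the full count $N^m 2^{-(j+m)(n-1)}$; since $\H^{n-1}$-regular branching contributes $N \approx 2^{n-1}$, the extra factor $(1-\theta)^m$ beats $2^{m\kappa}$ precisely when $\kappa < -\log_2(1-\theta)$. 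Sending $m \to \infty$ then forces $\H^{s}(\Sigma) = 0$, whence $\dim_H \Sigma \le n-1-\kappa$ with $\kappa = \kappa(n,\sigma,\hat C)$ as claimed.

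The hard part will be the bookkeeping that connects the \emph{metric} porosity (a ball of radius $\ge \sigma\varrho$ disjoint from $\Sigma$) to the \emph{combinatorial} statement that a fixed fraction of dyadic children can be discarded. One must choose the generation gap between the parent cube and the children large enough (depending on $\sigma$, $c$, $C$) that the emptied ball $B_{y,r}$ genuinely contains entire sub-cubes rather than merely overlapping them, and one must ensure via the two-sided regularity \eqref{stime} that such sub-cubes carry a definite proportion of the parent's $\H^{n-1}$-mass. Once this single-step reduction is made uniform, the geometric iteration and the dimension estimate are routine; all constants trace back only to $n$, $\sigma$, and $\hat C$, which is exactly what is needed for the application to Theorem \ref{thm:main1}.
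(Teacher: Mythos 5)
Your proposal follows essentially the same route the paper intends: the paper does not write the proof out but refers to the Introduction of \cite{S} and to \cite[Lemma 5.10]{DS}, noting that the key point is the dyadic-cube decomposition of the Ahlfors-regular set $K$ from \cite[Appendix]{D}, which is exactly the scaffolding on which you build your porosity-driven iteration. One refinement for the bookkeeping you rightly flag as the delicate step: iterate the $\H^{n-1}$-measure of the union of surviving cubes (which contracts by a clean factor $1-\theta$ per $m_0$-generation step) and convert measure to a cube count only once at the end via the lower bound in \eqref{stime}, since iterating the count of surviving children directly accumulates the comparability constants of the cube construction exponentially and would spoil the exponent.
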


Let us also remark that arguing as in \cite[Lemma 3.3] {DF} one can actually show the following stronger statement on the measure of the \(\varrho\)-neighborhood of \(\Sigma\), see Equations  (3.7) and (3.8) in \cite{DF}. 
This implies that the Minkowski dimension of $\Sigma$ is bounded by $n-1-\kappa$.

Note that (as it should) the constant \(\widetilde C\) below depends also on \(\hat \varrho\) while \(\kappa\) does not.

\begin{lemma}\label{poro2} Let \(\Sigma\) and \(K\) be as in Lemma \ref{poro}, then there exists constant \(\widetilde C=\widetilde C(n,\sigma, \hat C,\hat \varrho)>0\) and \(\kappa=\kappa(n,\sigma,\hat C)>0\) such that
\[
\big|\{x\in B_1: \dist(x,\Sigma)\le \varrho\}\big|\le \widetilde C \varrho^{1+\kappa}\qquad \forall \varrho\le 1/2
\]
\end{lemma}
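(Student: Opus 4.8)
The plan is to combine the porosity hypothesis with the Ahlfors upper regularity in \eqref{stime} to control the number of dyadic cubes of a fixed scale that meet $\Sigma$, and then sum the volumes of the neighborhoods of these cubes over the relevant range of scales. First I would invoke the dyadic decomposition of the Ahlfors-regular set $K$ coming from \cite[Appendix]{D}: there is a family $\{Q\}$ of ``dyadic cubes'' in $K$, organized into generations $j\ge j_0$, where the cubes of generation $j$ have diameter comparable to $2^{-j}$ and cover $K$ with bounded overlap. Using the lower bound in \eqref{stime}, each generation-$j$ cube $Q$ contains a ball $B_{y_Q,c2^{-j}}\cap K$ with $c=c(n,\hat C)$; using the upper bound, the number $N_j$ of generation-$j$ cubes is at most $C(n,\hat C)\,2^{j(n-1)}$ (once $2^{-j}\le\hat\varrho$).

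The key step is to show that the porosity forces a definite \emph{fraction} of each generation's cubes to be disjoint from $\Sigma$, so that the surviving count decays geometrically. Concretely, fix a generation $j$ with $2^{-j}\le\hat\varrho$ and a cube $Q$ of that generation containing a point $x\in\Sigma$. Applying Definition \ref{def:porosity} at $x$ with $\varrho\simeq 2^{-j}$ produces a ball $B_{y,r}$ with $r\in(\sigma\varrho,\varrho)$ and $B_{y,r}\cap K\subset K\setminus\Sigma$. Since $r\ge\sigma 2^{-j}$, this $\Sigma$-free ball contains at least one dyadic cube of generation $j+m$, for a fixed $m=m(n,\sigma,\hat C)$ chosen so that $2^{-(j+m)}\le c\,\sigma 2^{-j}$. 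Iterating over blocks of $m$ generations, I would let $M_\ell$ denote the number of generation-$(j_0+\ell m)$ cubes meeting $\Sigma$; the argument above shows that inside each such cube at least one sub-cube $m$ generations finer is $\Sigma$-free, which together with the bounded overlap and the comparability of cube counts across $m$ generations yields $M_{\ell+1}\le (1-\theta)\,(\text{number of children})$ with $\theta=\theta(n,\sigma,\hat C)\in(0,1)$. Hence $M_\ell\le C\,\big[(1-\theta)\,2^{m(n-1)}\big]^{\ell}$, i.e. the number of $\Sigma$-meeting cubes at scale $2^{-j}$ is at most $C\,2^{j(n-1-\kappa)}$ with $\kappa=\kappa(n,\sigma,\hat C)>0$. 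This is exactly \eqref{eq:haus} at the level of covering numbers and simultaneously the input for the Minkowski estimate.

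For the neighborhood bound I would fix $\varrho\le 1/2$, choose $j$ with $2^{-j}\simeq\varrho$, and note that $\{x:\dist(x,\Sigma)\le\varrho\}$ is covered by the $C\varrho$-neighborhoods of the generation-$j$ cubes meeting $\Sigma$. Each such neighborhood has volume $\le C(n)\,\varrho^{\,n}$, and there are at most $C\,2^{j(n-1-\kappa)}\le C\,\varrho^{-(n-1-\kappa)}$ of them, giving
\[
\big|\{x\in B_1:\dist(x,\Sigma)\le\varrho\}\big|\le C(n)\,\varrho^{n}\cdot C\,\varrho^{-(n-1-\kappa)}=\widetilde C\,\varrho^{\,1+\kappa},
\]
which is the desired estimate; tracking the constants shows $\widetilde C$ depends on $\hat\varrho$ (through the threshold generation $j_0$ where the dyadic structure and \eqref{stime} become available) while $\kappa$ depends only on $n,\sigma,\hat C$. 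Taking $\varrho\to0$ and comparing with the standard formula for Minkowski dimension then yields $\dim_{\mathcal M}\Sigma\le n-1-\kappa$.

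The main obstacle I expect is the bookkeeping in the geometric-decay step: one must verify that the single $\Sigma$-free ball produced by porosity genuinely removes a \emph{fixed proportion} of the finer-generation cubes descending from $Q$, uniformly in $Q$ and $j$, and that the bounded-overlap property of the dyadic family does not degrade this proportion. This requires comparing the ball $B_{y,r}$ (whose center $y$ need not be a cube center, and which may straddle several cubes) with the dyadic grid, and using both inequalities in \eqref{stime} to guarantee that it swallows at least one, and at most boundedly many, cubes of generation $j+m$. Once this uniform ``definite fraction removed'' statement is in hand, the summation is routine; this is precisely the argument carried out in \cite[Lemma 3.3]{DF}, and the estimates (3.7)--(3.8) cited there can be transcribed with $K$ in place of the Mumford--Shah jump set.
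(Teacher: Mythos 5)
Your proposal is correct and follows essentially the same route the paper takes: the paper gives no self-contained proof but defers to the dyadic-cube/porosity counting argument of \cite[Lemma 3.3]{DF} (built on the decomposition of Ahlfors-regular sets from \cite[Appendix]{D}), which is precisely the geometric-decay-of-cube-counts scheme you reconstruct, including the correct identification of where $\hat\varrho$ enters $\widetilde C$ and where the bookkeeping with the $\Sigma$-free ball versus the dyadic grid must be done carefully.
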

We now prove Theorem \ref{thm:main1}.
\begin{proof}[Proof of Theorem \ref{thm:main1}] By Theorem \ref{thm:main2} \(\Sigma(E)\cap \overline B_{1/2}\) is \((\sigma,\hat \varrho)\)-porous in \(\partial E\cap B_1\) with \(\sigma=\sigma(n,\beta/\alpha)\) and \(\hat \varrho=\hat \varrho\big(n,r_0,\,\Lambda/\eps, \,\alpha D_u(1)/\eps\big)\).  Moreover, using \eqref{density} and arguing as in the proof of Theorem \ref{thm:main2}, we see  that for every \((\Lambda,r_0)\)-minimizers of \(\F^{\alpha,\beta}_\eps\)   it holds
\[
\varrho^{n-1}/C_2\le \mathcal H^{n-1}(\partial E\cap B_{x,\varrho})\le C_2\varrho^{n-1} \qquad \forall\, x\in B_{3/4}, \quad \varrho \le  \hat \varrho_1,
\]
where \(C_2=C_2(n,\beta/\alpha)\) and \(\hat \varrho_1=\hat \varrho_1\big(n,r_0,\beta/\alpha,\Lambda/\eps, \,\alpha D_u(1)/\eps\big)\). We can then apply Lemma \ref{poro} to deduce that there exists \(\kappa=\kappa(n,\beta/\alpha)>0\) such that 
\[
\dim_{\H} \bigl(\Sigma (E)\cap \overline B_{1/2}\bigr)\le n-1-\kappa.
\]
A simple scaling and covering argument then concludes the proof.
\end{proof}

\end{document}